\newcommand\eref[1]{(\ref{#1})}
\newtheorem{theorem}{Theorem}[section]
\newtheorem{remark}{Remark}[section]}
\newenvironment{proof}{{\flushleft \bf Proof:}}{}
\newenvironment{acknowledgment}{{\flushleft \bf Acknowledgment:}}{}
\numberwithin{equation}{section}
\numberwithin{figure}{section}
\numberwithin{table}{section}
\newcommand{\mx}{\bm{x}}
\newcommand{\mq}{\bm{q}}
\newcommand{\mf}{\bm{F}}
\newcommand{\ms}{\bm{S}}
\newcommand{\mg}{\bm{G}}
\newcommand{\mo}{\bm{0}}
\newcommand{\dx}{\Delta x}
\newcommand{\dy}{\Delta y}
\newcommand{\hf}{{\frac{1}{2}}}
\newcommand{\jph}{{j+\frac{1}{2}}}
\newcommand{\jmh}{{j-\frac{1}{2}}}
\newcommand{\jpmh}{{j\pm\frac{1}{2}}}
\newcommand{\kpmh}{{k\pm\frac{1}{2}}}
\newcommand{\kph}{{k+\frac{1}{2}}}
\newcommand{\kmh}{{k-\frac{1}{2}}}
\def\softd{{\leavevmode\setbox1=\hbox{d}%
          \hbox to 1.05\wd1{d\kern-0.4ex{\char039}\hss}}}
\def\Chi {{\Large{\mbox{$\chi$}}}}
\newcommand*\xbar[1]{%
  \hbox{%
    \vbox{%
      \hrule height 0.5pt 
      \kern0.5ex
      \hbox{%
        \kern-0.1em
        \ensuremath{#1}%
        \kern-0.0em
      }%
    }%
  }%
}
\newcommand{\refcheckize}[1]{%
  \expandafter\let\csname @@\string#1\endcsname#1%
  \expandafter\DeclareRobustCommand\csname relax\string#1\endcsname[1]{%
    \csname @@\string#1\endcsname{##1}\wrtusdrf{##1}}%
  \expandafter\let\expandafter#1\csname relax\string#1\endcsname
}
\title{Well-Balanced Schemes for the Euler Equations with Gravitation: Conservative Formulation Using Global Fluxes}
\author{Alina Chertock\thanks{Department of Mathematics, North Carolina State University, Raleigh, NC 27695, USA;
{\tt chertock@math.ncsu.edu}}~{}, 
Shumo Cui\thanks{Department of Mathematics, Temple University, Philadelphia, PA 19122, USA; {\tt shumo.cui@temple.edu}}~{},
Alexander Kurganov\thanks{Department of Mathematics, Southern University of Science and Technology of China, Shenzhen, 518055, China and
Mathematics Department, Tulane University, New Orleans, LA 70118, USA; {\tt kurganov@math.tulane.edu}}~{},\\
\c{S}eyma Nur \"{O}zcan\thanks{Department of Mathematics, North Carolina State University, Raleigh, NC 27695, USA; {\tt
snozcan@ncsu.edu}}~{} and Eitan Tadmor\thanks{Department of Mathematics, Center of Scientific Computation and Mathematical Modeling
(CSCAMM), Institute for Physical sciences and Technology (IPST), University of Maryland, College Park, MD 20742, USA; {\tt
tadmor@cscamm.umd.edu}}}
\date{}
\begin{document}

\maketitle
\begin{abstract}
We develop a second-order well-balanced central-upwind scheme for the compressible Euler equations with gravitational source term. Here, we
advocate a new paradigm based on a purely conservative reformulation of the equations using global fluxes. The proposed scheme is capable of
exactly preserving steady-state solutions expressed in terms of a nonlocal equilibrium variable. A crucial step in the construction of the
second-order scheme is a well-balanced piecewise linear reconstruction of equilibrium variables combined with a well-balanced
central-upwind evolution in time, which is adapted to reduce the amount of numerical viscosity when the flow is at (near) steady-state
regime. We show the performance of our newly developed central-upwind scheme and demonstrate importance of perfect balance between the 
fluxes and gravitational forces in a series of one- and two-dimensional examples.
\end{abstract}

\noindent
{\bf Key words:} Euler equations of gas dynamics with gravitation, well-balanced scheme, equilibrium variables, central-upwind scheme,
piecewise linear reconstruction.

\noindent
{\bf AMS subject classification:} 76M12, 65M08, 35L65, 76N15, 86A05.


\section{Introduction}
\newcommand{\bx}{{\bm{x}}}
\newcommand{\bu}{{\bm{u}}}

We are interested in approximations of nonlinear hyperbolic systems of balance laws, 
\begin{equation}
\mq_t+\nabla_{\bm{x}}\cdot\vec{\bm{F}}(\mq)=\ms(\mq), 
\label{sys}
\end{equation}
where $\bm{x}\in\mathbb{R}^d$ is a multivariate space variable, $t$ is the time, $\vec{\mf}(\mq)$ is a flux, and $\ms(\mq)$ is a source
term. Our main concern is with \emph{well-balanced} approximations for such systems, which have to address two competing aspects of 
\eqref{sys}. On one hand, they employ conservative discretization of the nonlinear flux to produce high-resolution approximation of the
transient solution, $\mq=\mq(\mx,t)$. On the other hand, a well-balanced scheme is expected to capture the correct steady-state solutions, 
$\mq_\infty=\mq_\infty(\mx)$, satisfying $\nabla_{\bm{x}}\cdot\vec{\bm{F}}(\mq_\infty)=\ms(\mq_\infty)$. 

To make our ideas concrete, we focus on the compressible Euler equations with gravitation, which in the two-dimensional (2-D) case reads as
\begin{equation}
\left\{\begin{aligned}
&\rho_t+(\rho u)_x+(\rho u)_y=0,\\
&(\rho u)_t+(\rho u^2+p)_x+(\rho uv)_y=-\rho\phi_x,\\
&(\rho u)_t+(\rho uv)_x+(\rho v^2+p)_y=-\rho\phi_y,\\
&E_t+(u(E+p)_x+(v(E+p))_y=-\rho u\phi_x-\rho v\phi_y,\\
\end{aligned}\right.
\label{1.2}
\end{equation}
where $\rho$ is the density, $u$ and $v$ are the velocities in the $x$- and $y$-directions, respectively, $E$ is the total energy,
$p:=(\gamma-1)(E-\hf \rho(u^2+v^2))$ is the pressure, and $\phi$ is the time-independent continuous gravitational potential. 

The system of balance laws \eref{1.2} is used to model astrophysical and atmospheric phenomena including supernova explosions 
\cite{KM14,KM16}, (solar) climate modeling and weather forecasting \cite{BKLL}. In many physical applications, solutions of this system are
sought as small perturbations of the underlying steady states. Capturing such solutions numerically is a challenging task since the size of
these perturbations may be smaller than the size of the truncation error affordable by the given computational  grid. It is important 
therefore, to design a \emph{well-balanced} numerical method, which is capable of exactly preserving (certain)  steady-state solutions, so
that the perturbed solutions will be resolved on the given grid, free of non-physical spurious oscillations.

\subsection{Conservative Formulation Using Global Fluxes}
Well-balanced schemes were introduced in \cite{GL} and mainly developed in the context of shallow water equations; see, e.g.,
\cite{ABBKP,CDKL,FMT11,GPS,Jin,KPshw,LeV98,NXS09,PS,RB09,XSN} and references therein. Further extensions to Euler equations with gravitation
will be mentioned below. Here, we advocate a new approach for designing well-balanced schemes for the system  \eref{1.2}. Our starting
point is to rewrite this system by incorporating the gravitational source terms into the corresponding momentum and energy fluxes, thus
arriving at the following \emph{purely conservative formulation}:
\begin{equation}
\left\{\begin{aligned}
&\rho_t+(\rho u)_x+(\rho v)_y=0,\\
&(\rho u)_t+(\rho u+K)_x+(\rho uv)_y=0,\\
&(\rho v)_t+(\rho uv)_x+(\rho v^2+L)_y=0,\\
&(E+\rho\phi)_t+(u(E+\rho\phi+p))_x+(v(E+\rho\phi+p))_y=0.
\end{aligned}\right.
\label{eq:global}
\end{equation}
Here, $K:=p+Q$ and $L:=p+R$, where $Q$ and $R$ involve the \emph{global variables} 
\begin{equation}
Q(x,y,t):=\int\limits^{x}\rho(\xi,y,t)\phi_x(\xi,y)\,d\xi,\quad R(x,y,t):=\int\limits^y\rho(x,\eta,t)\phi_y(x,\eta)\,d\eta.
\label{QR}
\end{equation}

It is the global momentum fluxes in \eqref{eq:global}, \eqref{QR} that play a central role in our proposed approach. We argue below, both 
theoretically and demonstrate numerically, that the use of such purely conservative formulation \eqref{eq:global}, \eqref{QR} is an 
effective approach to capture {\color{black}arbitrary (without any assumption of a thermal equilibrium)} motionless steady states given by
\begin{equation}
u\equiv0,\quad v\equiv0,\quad K_x\equiv0, \quad L_y\equiv0. 
\label{1.11}
\end{equation}
To this end, one needs to carefully integrate the global variables $Q$ and $R$ on the fly, to recover the reformulated momentum equations 
in terms of the corresponding global fluxes on the left-hand side of the second and third equations in \eref{eq:global}. The use of such 
global fluxes is not easily amenable, however, for numerical approximations which make use of (approximate) Riemann problem solvers. 
Instead, our well-balanced approach employs the class of \emph{central-upwind} (CU) schemes, \cite{KTcl,KLin,KNP,KTrp}, which offer highly 
accurate Godunov-type finite-volume methods that do \emph{not} require any (approximate) Riemann problem solver, and as such are natural 
candidates for numerical approximation of global-flux based formulation of the Euler equations \eref{eq:global}. The adaptation of the CU
schemes in the present context of global flux must be handled with care: in \S\ref{sec:lack} we show that a high-order reconstruction of
the \emph{conservative} variables $(\rho,\rho u,\rho v,E)^\top$ do not possess the well-balanced property. Instead, we introduce a special
reconstruction based on the \emph{equilibrium variables}, $(\rho,\rho u,\rho v,K,L)^\top$ rather than the conservative ones. The resulting
well-balanced CU schemes in the one- and two-dimensional cases are developed in \S\ref{sec2} and \S\ref{sec3}, respectively. In
\S\ref{sec4}, we present a series of examples of one- and two-dimensional numerical simulations which confirm the desired performance of our
proposed well-balanced, high-resolution CU scheme is coupled with the global-based fluxes of purely conservative formulation.

We close this introduction with a brief overview of related work on well-balanced schemes for the Euler equations with gravitational fields.
In \cite{LeVBale}, quasi-steady wave-propagation methods were developed for models with a static gravitational field. In \cite{BKLL}, 
well-balanced finite-volume methods, which preserve a certain class of steady states, were derived for nearly hydrostatic flows. The recent 
works \cite{ChaKli,DZBK16,TKK,KM16} introduce  finite-volume methods with carefully reconstructed solutions that handle more general 
gravitational potentials and preserve more general classes of steady states. In \cite{LXL,TXCD,XLC10}, gas-kinetic schemes were extended
to the multidimensional gas dynamic equations and well-balanced numerical methods were developed for problems, in which the gravitational
potential was modeled by a piecewise step function. More recently, higher order finite-difference \cite{XinShu13} and finite-volume 
\cite{LiXing} methods for the gas dynamics with gravitation have been introduced. 

In summary, we advocate the three main ingredients of the proposed approach: (i) the reformulation of Euler systems with gravitational 
source field as a purely conservative system using global-based fluxes; (ii) the use of Riemann-problem-solver free CU schemes to 
advance a highly-accurate numerical solution for such systems; and (iii) the reconstruction of such numerical solution using equilibrium 
rather conservative variables. 

\section{One-Dimensional Central-Upwind Scheme}\label{sec2}
In this section, we briefly describe the semi-discrete CU scheme applied to the one-dimensional (1-D) version of the system 
\eqref{eq:global} considered in the $y$-direction: 
\begin{equation}
\mq_t+\mg(\mq)_y=\mo,
\label{g11}
\end{equation}
where
\begin{equation}
\mq=(\rho,\rho v,E+\rho\phi)^\top,\quad\mg(\mq)=\left(\rho v,\rho v^2+L,v(E+\rho\phi+p)\right)^\top
\label{g12}
\end{equation}
and 
\begin{equation}
p=(\gamma-1)\Big(E-\frac{1}{2}\rho v^2\Big).
\label{eos1}
\end{equation}
The simplest steady state of \eref{g11}--\eref{eos1} is the motionless one for which 
\begin{equation}
v\equiv0,\quad L\equiv{\rm Const}.
\label{ss1}
\end{equation}
We assume that the cell averages of the numerical solution, $\xbar\mq_k(t):=\frac{1}{\dy}\int_{C_k}\mq(y,t)\,dy$, are available at time $t$.
Here, $\{C_k\}$ is a partition of the computational domain into finite-volume cells $C_k:=[y_\kmh,y_\kph]$ of size $|C_k|=\dy$ centered at
$y_k$, $~k=k_\ell,\ldots,k_r$. To advance the solution in time, we use the semi-discrete CU scheme \cite{KLin} applied to
\eref{g11}--\eref{eos1} which results in the following system of ODEs:
\begin{equation}
\frac{d}{dt}\,\xbar\mq_k=-\frac{\bm{\mathcal G}_\kph-\bm{\mathcal G}_\kmh}{\dy},
\label{2.1a}
\end{equation}
computed in terms of the CU numerical flux
\begin{equation}
\hspace*{-0.2cm}\bm{\mathcal G}_\kph:=\frac{b^+_\kph\mg(\mq_k^{\rm N})-b^-_\kph\mg(\mq_{k+1}^{\rm S})}{b^+_\kph-b^-_\kph}+
\beta_\kph\left(\mq_{k+1}^{\rm S}-\mq_k^{\rm N}-\delta\mq_\kph\right),~~\beta_\kph:=\frac{b^+_\kph b^-_\kph}{b^+_\kph-b^-_\kph}.
\label{2.2}
\end{equation}
Here, 
\begin{equation}
\mq_k^{\rm N}:=\widetilde{\mq}(y_\kph-0)=\xbar{\mq}_k+\frac{\dy}{2}{(\mq_y)}_k\quad\mbox{and}\quad
\mq_{k+1}^{\rm S}:=\widetilde{\mq}(y_\kph+0)=\xbar{\mq}_{k+1}-\frac{\dy}{2}{(\mq_y)}_{k+1}
\label{2.5}
\end{equation}
are the one-sided point values of the computed solution at the ``North'' and ``South'' cell interfaces $y=y_\kph$, which are recovered from 
a second-order  piecewise linear reconstruction
{\color{black}
$$
\widetilde{\mq}(y)=\sum_k\big(\xbar\mq_k+(\mq_y)_k(y-y_k)\big)\Chi_{C_k}(y),\quad\Chi_{C_k}(y)=
\left\{\begin{aligned}1,&~\mbox{if}~y\in C_k,\\0,&~\mbox{otherwise.}\end{aligned}\right.
$$}

\noindent
To avoid oscillations, the vertical slopes in \eref{2.5}, $(\mq_y)$, are to be computed with the help a nonlinear limiter. In our numerical
experiments reported below, we have used the generalized minmod limiter applied component-wise; see, e.g.,
\cite{LN,NT,Swe}:
\begin{equation*}
{(\mq_y)}_k={\rm minmod}\left(\theta\,\frac{\xbar\mq_{k+1}-\xbar\mq_k}{\dy},\,\frac{\xbar\mq_{k+1}-\xbar\mq_{k-1}}{2\dy},\,
\theta\,\frac{\xbar\mq_k-\xbar\mq_{k-1}}{\dy}\right).
\end{equation*}
The parameter $\theta\in[1,2]$ controls the amount of numerical dissipation: larger values of $\theta$ typically lead to less dissipative
but more oscillatory scheme.  

Equipped with the reconstructed point values 
$\mq_k^{\rm N,S}=\big(\rho_k^{\rm N,S},(\rho v)_k^{\rm N,S},(E+\rho\phi)_k^{\rm N,S}\big)^\top$, we obtain the point values of the other
variables needed in the computation of the numerical fluxes in \eqref{2.2}, that is, 
$$
v_k^{\rm N,S}=\frac{(\rho v)_k^{\rm N,S}}{\rho_k^{\rm N,S}},\quad p_k^{\rm N,S}=(\gamma-1)\left[(E+\rho\phi)_k^{\rm N,S}
-(\rho\phi)_k^{\rm N,S}-\frac{\rho_k^{\rm N,S}(v_k^{\rm N,S})^2}{2}\right],
$$
and
$$
L_k^{\rm N}=p_k^{\rm N}+R_{k+\hf},\quad L_k^{\rm S}=p_k^{\rm S}+R_{k-\hf},
$$
where $\{R_{k+\hf}\}$ are calculated recursively using the midpoint quadrature rule:
 \begin{equation}
R_{k_\ell-\hf}=0,\qquad R_\kph=R_\kmh+\dy\,\xbar\rho_k(\phi_y)_k,\qquad k=k_\ell,\ldots,k_r,
\label{RRR1}
\end{equation}
and $(\rho\phi)_k^{\rm N}=\rho_k^{\rm N}\phi(y_{k+\hf}),\ (\rho\phi)_k^{\rm S}=\rho_k^{\rm S}\phi(y_{k-\hf})$,
and $(\phi_y)_k:=\phi_y(y_k)$.

The terms $b_\kph^\pm$ are one-sided local speeds of propagation, which can be estimated using the smallest and largest eigenvalues of the 
Jacobian $\partial\mg/\partial\mq$:
\begin{equation}
b^+_\kph=\max\left(v_k^{\rm N}+c_k^{\rm N},\,v_{k+1}^{\rm S}+c_{k+1}^{\rm S},\,0\right),\quad
b^-_\kph=\min\left(v_k^{\rm N}-c_k^{\rm N},\,v_{k+1}^{\rm S}-c_{k+1}^{\rm S},\,0\right),
\label{2.9}
\end{equation}
where $c_{k+1}^{\rm N,S}$ are the speeds of sound ($c^2=\gamma p/\rho$). 

Finally, the second term on the right-hand side (RHS) of \eqref{2.2} is a built-in \emph{anti-diffusion term} which involves
$\delta\mq_\kph={\rm minmod}\big(\mq_{k+1}^{\rm S}-\mq_\kph^*,\,\mq_\kph^*-\mq_k^{\rm N}\big)$ using the intermediate state
\begin{equation}
\mq_\kph^*=\frac{b^+_\kph\mq_{k+1}^{\rm S}-b^-_\kph\mq_k^{\rm N}-\big(\mg(\mq_{k+1}^{\rm S})-\mg(\mq_k^{\rm N})\big)}{b^+_\kph-b^-_\kph}.
\label{ad}
\end{equation}

\subsection{Lack of Well-Balancing}\label{sec:lack}
The CU scheme \eref{2.1a}--\eref{ad} is not capable of exactly preserving the steady-state solution \eqref{ss1}. Indeed, substituting 
$v\equiv0$ into \eref{2.1a}--\eref{ad} and noting that for all $k$, $b^+_\kph=-b^-_\kph$ (since $v_k^{\rm N}=v_{k+1}^{\rm S}=0$), we obtain
the ODE system
\begin{equation}
\left\{\begin{aligned}
&\frac{d\,\xbar\rho_k}{dt}=-\frac{1}{\dy}\left[\beta_{k+\hf}(\rho^{\rm S}_{k+1}-\rho^{\rm N}_k-\delta\rho_\kph)-
\beta_{k-\hf}(\rho^{\rm S}_k-\rho^{\rm N}_{k-1}-\delta\rho_\kmh)\right],\\[0.4ex]
&\frac{d(\xbar{\rho v})_k}{dt}=-\frac{1}{2\dy}\left[(L^{\rm S}_{k+1}+L^{\rm N}_k)-(L^{\rm S}_k+L^{\rm N}_{k-1})\right],\\[0.4ex]
&\frac{d(\xbar E_k+\xbar\rho_k\phi_k)}{dt}=-\frac{1}{\dy}\left[\beta_{k+\hf}(E+\rho\phi)^{\rm S}_{k+1}-(E+\rho\phi)^{\rm N}_k
-\delta(E+\rho\phi)_\kph\right.\\
&\hspace*{3.7cm}\left.-\beta_{k-\hf}((E+\rho\phi)^{\rm S}_k-(E+\rho\phi)^{\rm N}_{k-1}-\delta(E+\rho\phi)_\kmh\right],
\end{aligned}\right.
\label{qqq}
\end{equation}
whose RHS does not necessarily vanish and hence the steady state \eqref{ss1} would not be preserved at the discrete level. We would
like to stress that even for the first-order version of the CU scheme \eref{2.1a}--\eref{ad}, that is, when $(\mq_y)_k\equiv0$ in \eref{2.5}, the RHS of \eref{qqq} does not vanish. This means that the lack of balance between the numerical flux and source
terms is a fundamental problem of the scheme. We also note that for smooth solutions, the balance error in \eref{qqq} is expected to be of
order $(\Delta y)^2$, but a coarse grid solution may contain large spurious waves as demonstrated in the numerical experiments presented in
\S\ref{sec4}.

\subsection{Well-Balanced Central-Upwind Scheme}\label{sec22}
In this section, we present a well-balanced modification of the CU scheme. We first introduce a \emph{well-balanced reconstruction}, which
is performed on the equilibrium variables rather than the conservative ones, and then apply a slightly modified CU scheme to the system 
\eref{g11}--\eref{eos1}.

\medskip\noindent
{\bf Well-balanced reconstruction}.
We now describe a special reconstruction, which is used in the derivation of a well-balanced CU scheme. The main idea is to reconstruct
equilibrium variable $L$ rather than $E$. For the first two components we still use  the same piecewise linear reconstructions as before, 
$\widetilde\rho(y)$ and $(\widetilde{\rho v})(y)$, and compute the corresponding point values of 
$\rho_k^{\rm N,S},\ (\rho v)_k^{\rm N,S}$, and $v_k^{\rm N,S}$.

In order to reconstruct $L=p+R$, we use the cell-interface point values $R_{k\pm\hf}$ in \eqref{RRR1} and compute the corresponding 
values of $R$ at the cell centers as
\begin{equation}
R_k=\hf\Big(R_\kmh+R_\kph\Big),\qquad k=k_\ell,\ldots,k_r,
\label{RRR}
\end{equation}
and thus the values of $L$ at the cell centers are
\begin{equation}
L_k=p_k+R_k,
\label{2.15}
\end{equation}
where $p_k=(\gamma-1)\left(\xbar{E}_k-\frac{\xbar\rho_k}{2}v_k^2\right)$ is obtained from the corresponding equation of state (EOS)  and
$v_k=(\xbar{\rho v})_k/\xbar{\rho}_k$. Equipped with \eref{2.15}, we then apply the minmod reconstruction procedure to $\{L_k\}$ and obtain
the point values of $L$ at the cell interfaces:
\begin{equation*}
L_k^{\rm N}=L_k+\frac{\dy}{2}{(L_y)}_k,\quad L_{k+1}^{\rm S}=L_{k+1}-\frac{\dy}{2}{(L_y)}_{k+1},
\end{equation*}
where the slopes $\{(L_y)_k\}$ are computed using the generalized minmod limiter.
Finally, the point values of $p$ and $E$ needed for computation of numerical fluxes are
given by $p_k^{\rm N,S}=L_k^{\rm N,S}-R_{k\pm \frac{1}{2}}$ and 
$E_k^{\rm N,S}=\frac{1}{\gamma-1}\,p_k^{\rm N,S}+\frac{1}{2}\rho_k^{\rm N,S}(v_k^{\rm N,S})^2$, respectively.
\begin{remark}
If the gravitational potential is linear ($\phi(y)=gy$ with $g$ being the gravitational constant), then $R$ can be computed by integrating
the piecewise linear reconstruction of $\rho$, \eref{2.4}, which results in the piecewise quadratic approximation of $R$:
\[
\widetilde R(y)=g\!\!\!\int\limits_{y_{k_\ell-\frac{1}{2}}}^y\!\!\!\widetilde\rho(\xi)\,d\xi=
g\sum_k\Big[\Delta y\sum_{i=k_\ell}^{k-1}\xbar\rho_i+\xbar\rho_k(y-y_{k-\hf})
+\frac{(\rho_y)_k}{2}(y-y_{k-\hf})(y-y_{k+\hf})\Big]\Chi_{C_k}(y).
\]
Then, the point values of $R$ at the cell interfaces and at cell centers are given, respectively, by
\begin{equation*}
R_\kph=g\dy\sum_{i=k_\ell}^k\xbar\rho_i\quad\mbox{and}\quad
R_k=g\Delta y\sum_{i=k_\ell}^{k-1}\xbar\rho_i+\frac{g\dy}{2}\,\xbar\rho_k-\frac{g(\dy)^2}{8}(\rho_y)_k.
\end{equation*}
\end{remark}

\noindent
{\bf Well-balanced evolution}. 
The cell-averages $\xbar\mq_k$ are evolved in time according to the system of ODEs \eref{2.1a}. The second component of the
numerical fluxes $\bm{\mathcal{G}}$ is computed the same way as in \eref{2.2}, but with $\mg$ given by \eref{g11}--\eref{eos1}, that is,
\begin{equation}
\begin{aligned}
\mathcal{G}^{\,(2)}_\kph&=\frac{b^+_\kph\left(\rho_k^{\rm N}(v_k^{\rm N})^2+L_k^{\rm N}\right)-
b^-_\kph\left(\rho_{k+1}^{\rm S}(v_{k+1}^{\rm S})^2+L_{k+1}^{\rm S}\right)}{b^+_\kph-b^-_\kph}\\
&+\beta_\kph\left((\rho v)_{k+1}^{\rm S}-(\rho v)_k^{\rm N}-\delta(\rho v)_\kph\right),
\end{aligned}
\label{f2}
\end{equation}
while the first and third components are modified in order to exactly preserve the steady state:
\begin{equation}
\begin{aligned}
\mathcal{G}^{\,(1)}_\kph&=\frac{b^+_\kph(\rho v)_k^{\rm N}-b^-_\kph(\rho v)_{k+1}^{\rm S}}{b^+_\kph-b^-_\kph}
+\beta_\kph H(\psi_{k+\hf})\cdot\left(\rho_{k+1}^{\rm S}-\rho_k^{\rm N}-\delta\rho_\kph\right),\\[0.8ex]
\mathcal{G}^{\,(3)}_\kph&=\frac{b^+_\kph v_k^{\rm N}(E_k^{\rm N}+(\rho\phi)_k^{\rm N}+p_k^{\rm N})-
b^-_\kph v_{k+1}^{\rm S}(E_{k+1}^{\rm S}+(\rho\phi)_{k+1}^{\rm S}+p_{k+1}^{\rm S})}{b^+_\kph-b^-_\kph}\\
&+\beta_\kph 
\left[E_{k+1}^{\rm S}-E_k^{\rm N}+H(\psi_{k+\hf})\cdot
\left((\rho\phi)_{k+1}^{\rm S}-(\rho\phi)_k^{\rm N}-\delta(E+\rho\phi)_\kph\right)\right],
\end{aligned}
\label{2.20}
\end{equation}
where $\psi_{k+\hf}=\frac{|L_{k+1}-L_k|}{\dy}\cdot\frac{y_{k_r+\frac{1}{2}}-y_{k_\ell-\frac{1}{2}}}{\max\{L_k,L_{k+1}\}}$.

Notice that the last terms on the RHS of \eref{2.20} include a slight modification of the original CU flux, which is now multiplied by a
smooth cut-off function $H$, which is  small when the computed solution is locally (almost) at a steady state, where 
${|L_{k+1}-L_k|}/{\dy}\sim0$, and is otherwise  close to 1. This is done in order to guarantee the well-balanced property of the scheme,
shown in Theorem \ref{thm2.1} below. A sketch of a typical function $H$ is shown in Figure \ref{HHH}. In all of our numerical experiments,
we have used
\begin{equation}
H(\psi)=\frac{(C\psi)^m}{1+(C\psi)^m}, 
\label{hfnc}
\end{equation}
with $C=200$ and $m=6$. To reduce the dependence of the computed solution on the choice of particular values of $C$ and $m$, the
argument of $H$ in \eref{2.20} is normalized by a factor $\frac{y_{k_r+\frac{1}{2}}-y_{k_\ell-\frac{1}{2}}}{\max\{L_k,L_{k+1}\}}$, which
makes $H(\psi_\kph)$ dimensionless. \begin{figure}[ht!]
\centerline{\scalebox{0.54}{\includegraphics{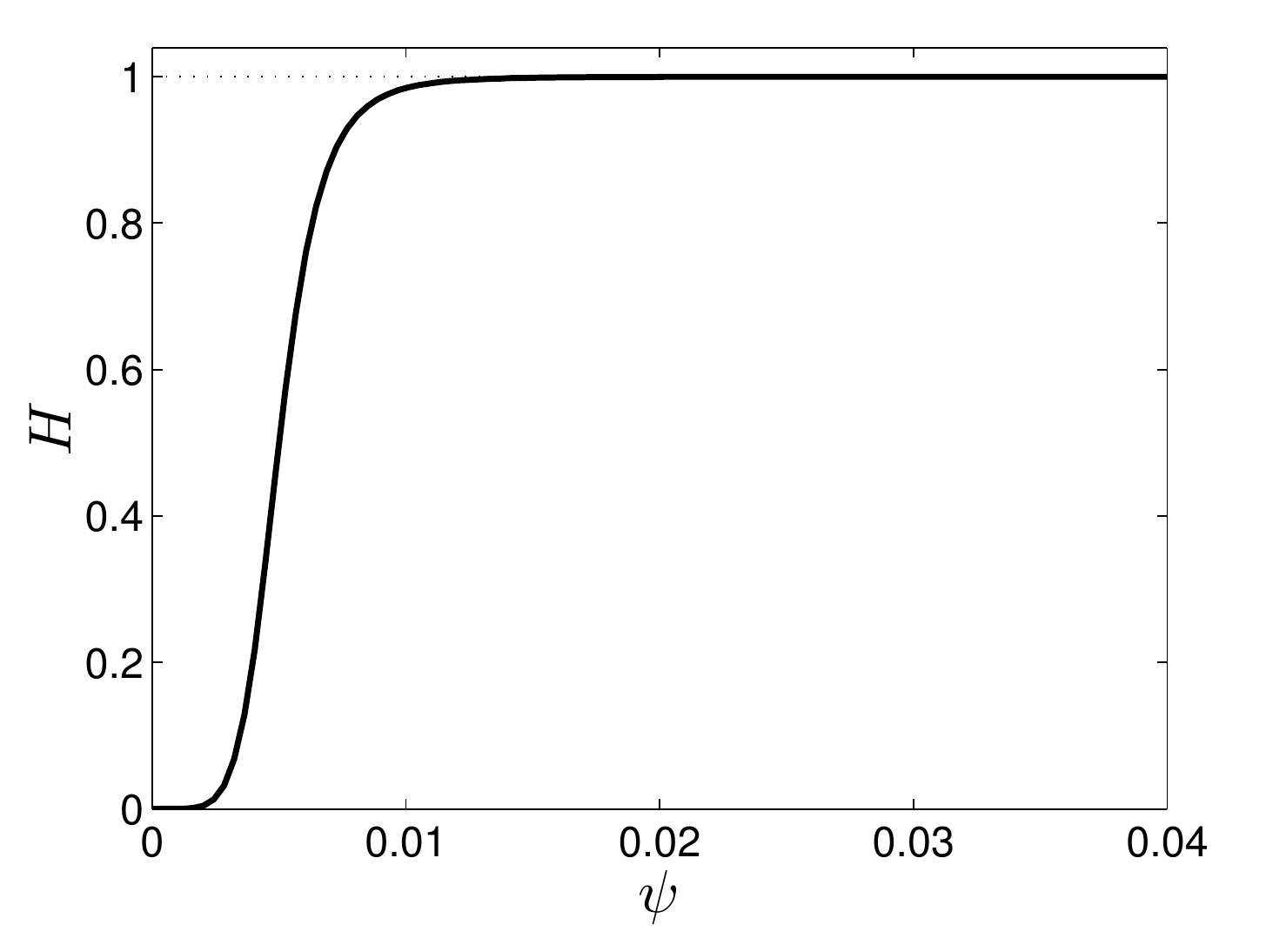}}}
\caption{\sf Sketch of $H(\psi)$.\label{HHH}}
\end{figure}

\begin{theorem}\label{thm2.1}
The semi-discrete CU scheme \eref{2.1a}, \eref{f2}, \eref{2.20} coupled with the reconstruction described in \S\ref{sec22} is well-balanced
in the sense that it exactly preserves the steady state \eqref{ss1}.
\end{theorem}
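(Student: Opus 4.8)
The plan is to verify that, under the discrete analogue of the steady state \eqref{ss1}, the right-hand side of \eqref{2.1a} vanishes identically, i.e. that every component of the numerical flux $\bm{\mathcal G}$ is independent of $k$. I interpret the discrete steady state as data for which $(\xbar{\rho v})_k=0$ and $L_k\equiv\bar L$ (a constant independent of $k$) for all $k$, where $L_k$ is the cell-center value \eqref{2.15}. First I would record the consequences of these two conditions for the reconstructed interface quantities. Since all $L_k$ coincide, the generalized minmod limiter returns $(L_y)_k=0$, so $L_k^{\rm N}=L_{k+1}^{\rm S}=\bar L$ at every interface; likewise $(\xbar{\rho v})_k\equiv0$ forces the minmod slope of $\rho v$ to vanish, whence $(\rho v)_k^{\rm N}=(\rho v)_{k+1}^{\rm S}=0$ and $v_k^{\rm N}=v_{k+1}^{\rm S}=0$. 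Two further facts follow: the one-sided speeds \eqref{2.9} satisfy $b^+_\kph=-b^-_\kph$ (so $b^+_\kph-b^-_\kph\neq0$ and $\beta_\kph$ is well defined), and the normalized indicator obeys $\psi_{k+\hf}=0$ because $|L_{k+1}-L_k|=0$, so that $H(\psi_{k+\hf})=H(0)=0$ by \eqref{hfnc}.

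With these in hand I would treat the three flux components in turn. For $\mathcal{G}^{(1)}_\kph$ the convective part is proportional to $(\rho v)^{\rm N,S}=0$ and therefore vanishes, while the anti-diffusion part is annihilated by the factor $H(\psi_{k+\hf})=0$; note that the density jump $\rho_{k+1}^{\rm S}-\rho_k^{\rm N}$ need \emph{not} vanish at a hydrostatic state, so the cut-off $H$ is precisely what is needed here. Hence $\mathcal{G}^{(1)}_\kph\equiv0$. For $\mathcal{G}^{(2)}_\kph$ in \eqref{f2}, the $\rho v^2$ contributions drop out and the convective part collapses to $(b^+_\kph\bar L-b^-_\kph\bar L)/(b^+_\kph-b^-_\kph)=\bar L$; the remaining term $\beta_\kph\big((\rho v)_{k+1}^{\rm S}-(\rho v)_k^{\rm N}-\delta(\rho v)_\kph\big)$ I would check vanishes by showing $\delta(\rho v)_\kph=0$. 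This follows because the second component of the intermediate state \eqref{ad} equals $\big(b^+_\kph\cdot0-b^-_\kph\cdot0-(\bar L-\bar L)\big)/(b^+_\kph-b^-_\kph)=0$, so the minmod of its two (zero) arguments is zero. Thus $\mathcal{G}^{(2)}_\kph\equiv\bar L$, a constant.

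The delicate component is the energy flux $\mathcal{G}^{(3)}_\kph$ in \eqref{2.20}, and this is where the choice to reconstruct $L$ rather than $E$ pays off. Its convective part is proportional to $v^{\rm N,S}=0$ and so vanishes. In the bracketed anti-diffusion term the factor $H(\psi_{k+\hf})=0$ kills both the $(\rho\phi)$-jump and the $\delta(E+\rho\phi)_\kph$ contributions, leaving only $\beta_\kph\big(E_{k+1}^{\rm S}-E_k^{\rm N}\big)$. The key step is to show this last difference is zero. Tracing the reconstruction, $p_k^{\rm N}=L_k^{\rm N}-R_{k+\hf}$ and $p_{k+1}^{\rm S}=L_{k+1}^{\rm S}-R_{k+\hf}$ share the same interface value $R_{k+\hf}$ and the same $L=\bar L$, so $p_k^{\rm N}=p_{k+1}^{\rm S}$; since $v=0$ gives $E_k^{\rm N,S}=p_k^{\rm N,S}/(\gamma-1)$, we obtain $E_k^{\rm N}=E_{k+1}^{\rm S}$ and hence $\mathcal{G}^{(3)}_\kph\equiv0$.

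Collecting these, each component of $\bm{\mathcal G}_\kph$ is independent of $k$ (equal to $0$, $\bar L$, and $0$, respectively), so $\bm{\mathcal G}_\kph=\bm{\mathcal G}_\kmh$ and the right-hand side of \eqref{2.1a} vanishes, i.e. $\frac{d}{dt}\xbar\mq_k=\mo$; this is exactly the claimed preservation of \eqref{ss1}. I expect the main obstacle to be the bookkeeping in the energy equation — specifically, verifying that reconstructing $L$ and then recovering $p$ and $E$ through $p^{\rm N,S}=L^{\rm N,S}-R_{k\pm\hf}$ yields matching one-sided energies $E_k^{\rm N}=E_{k+1}^{\rm S}$, which is precisely the mechanism that fails in the naive formulation of \S\ref{sec:lack} where $E$ itself is reconstructed.
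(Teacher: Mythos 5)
Your proof is correct and follows essentially the same route as the paper's: verify that under the discrete hydrostatic data each flux component is independent of $k$ (namely $0$, $\bar L$, $0$), with the crucial energy step being that $p_k^{\rm N}=L_k^{\rm N}-R_\kph$ and $p_{k+1}^{\rm S}=L_{k+1}^{\rm S}-R_\kph$ share the same interface value $R_\kph$, so $E_k^{\rm N}=E_{k+1}^{\rm S}$ when $v\equiv0$. You in fact supply two details the paper leaves implicit --- deriving the interface conditions from the cell-average data via vanishing minmod slopes, and checking $\delta(\rho v)_\kph=0$ through the intermediate state \eref{ad} --- but these are refinements of the same argument, not a different one.
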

\begin{proof}
Assume that at certain time level, we have
\begin{equation}
v_k^{\rm N}\equiv v_k\equiv v_k^{\rm S}\equiv0\quad\mbox{and}\quad L_k^{\rm N}\equiv L_k\equiv L_k^{\rm S}\equiv\widehat L,
\label{data}
\end{equation}
where $\widehat L$ is a constant. In order to prove that the proposed scheme is well-balanced, we will show that the numerical fluxes are
constant for all $k$ for the data in \eref{data} and thus need the RHS of \eref{2.1a} is identically equal to zero at such steady states.

Indeed, the first component in \eref{2.20} of the numerical flux vanish since $v_k^{\rm N}=v_{k+1}^{\rm S}=0$ and $L_k=L_{k+1}=\widehat L$
(the latter implies $H(\psi_\kph)=H(0)=0$). The second component \eref{f2} of the numerical flux is constant and equal to $\widehat L$
since $v_k^{\rm N}=v_{k+1}^{\rm S}=0$ and $L_k^{\rm N}=L_{k+1}^{\rm S}=\widehat L$. Finally, the third component in \eqref{2.20} of the
numerical flux also vanishes:
\begin{equation*}
\begin{aligned}
\mathcal{G}^{(3)}_\kph&=\beta_\kph
\left[E_{k+1}^{\rm S}-E_k^{\rm N}+H(\psi_{k+\hf})\cdot
\left((\rho\phi)_{k+1}^{\rm S}-(\rho\phi)_k^{\rm N}-\delta(E+\rho\phi)_\kph\right)\right]\\
&=\frac{\beta_\kph}{\gamma-1}\cdot\frac{p_{k+1}^{\rm S}-p_k^{\rm N}}{2}
=\frac{\beta_\kph}{2(\gamma-1)}\left[\big(L_{k+1}^{\rm S}-R_\kph)-(L_k^{\rm N}-R_\kph\big)\right]=0,
\end{aligned}
\end{equation*}
since $L_k^{\rm N}=L_{k+1}^{\rm S}=\widehat L$.$\hfill\blacksquare$
\end{proof}

\section{Two-Dimensional Well-Balanced Central-Upwind Scheme}\label{sec3} 
In this section, we describe the well-balanced semi-discrete CU scheme for the 2-D Euler equations with gravitation \eqref{eq:global}, which
can be rewritten in the following vector form:
\begin{equation*}
\mq_t+\mf(\mq)_x+\mg(\mq)_y=\mo,
\end{equation*}
where $\mq:=(\rho,\rho u,\rho v,E+\rho\phi)^\top$, the fluxes are 
\begin{equation*}
\begin{aligned}
&\mf(\mq):=(\rho u,\rho u^2+K,\rho uv,u(E+\rho\phi+p))^\top,\quad
\mg(\mq):=(\rho v,\rho uv,\rho v^2+L,v(E+\rho\phi+p))^\top,
\end{aligned}
\end{equation*}
in which $K=p+Q$ and $L=p+R$ with the global variables $Q$ and $R$ introduced in \eref{QR}.

Let $C_{j,k}:=[x_\jmh,x_\jph]\times[y_\kmh,y_\kph]$ denote the 2-D cells and we assume that the cell averages of the computed numerical
solution,
\begin{equation*}
\xbar\mq_{j,k}(t):=\frac{1}{\dx\dy}\iint\limits_{C_{j,k}}\mq(x,y,t)\,dx\,dy,\quad  ~j=j_\ell,\ldots,j_r,~k=k_\ell,\ldots,k_r
\end{equation*}
are available at time level $t$ and describe their CU evolution.

\bigskip
\noindent
{\bf Well-balanced reconstruction}.
Similarly to the 1-D case, we reconstruct only the first three components of the conservative variables, $\rho,\rho u,\rho v$,
\begin{equation}
\widetilde q^{\,(i)}(x,y)=\xbar q^{\,(i)}_{j,k}+(q^{(i)}_x)_{j,k}(x-x_j)+(q^{(i)}_y)_{j,k}(y-y_k),\quad(x,y)\in C_{j,k},~~i=1,2,3,
\label{2.4}
\end{equation}
and obtain the corresponding point values at the four cell interfaces $(x_\jpmh,y_k)$ and $(x_j,y_\kpmh)$ denoted 
$(q^{(i)})^{\rm E,W,N,S}_{j,k}$ with the slopes $(q^{(i)}_x)_{j,k}$ and $(q^{(i)}_y)_{j,k}$ being computed using the generalized minmod 
limiter.

The point values of the energy $p$ and $E$ should be calculated from the new equilibrium variables obtained from the reconstruction of $K$ 
and $L$. We emphasize  that since at the steady states \eref{1.11}, $K=K(y)$ is independent of $x$ and $L=L(x)$ is independent of $y$, we,
in fact, perform 1-D reconstructions for $K$ and $L$ in the $x$- and $y$-directions, respectively. To this end, we first use the midpoint 
rule to compute the point values of the integrals $Q$ and $R$ in \eref{QR} at the cell interfaces in the $x$- and $y$-directions, respectively:
\begin{equation}\label{RRR2}
\begin{aligned}
Q_{j_\ell-\hf,k}&=0,\quad~
\left\{\begin{aligned}
&Q_{\jph,k}=Q_{\jmh,k}+\dx\,\xbar\rho_{j,k}(\phi_x)_{j,k},\\
&Q_{j,k}=\hf\left\{Q_{j-\hf,k}+Q_{j+\hf,k}\right\},
\end{aligned}\right.
\quad j=j_\ell,\ldots,j_r,~k=k_\ell,\ldots,k_r,\\[0.4ex]
R_{j,k_\ell-\frac{1}{2}}&=0,\quad~
\left\{\begin{aligned}
&R_{j,\kph}=R_{j,\kmh}+\dy\,\xbar\rho_{j,k}(\phi_y)_{j,k},\\
&R_{j,k}=\hf\left\{R_{j,k-\hf}+R_{j,k+\hf}\right\},
\end{aligned}\right.
\quad j=j_\ell,\ldots,j_r,~k=k_\ell,\ldots,k_r,
\end{aligned}
\end{equation}
where $(\phi_x)_{j,k}:=\phi_x(x_j,y_k)$ and $(\phi_y)_{j,k}:=\phi_y(x_j,y_k)$. We then compute the cell center values 
$K_{j,k}= p_{j,k}+Q_{j,k}$ and $L_{j,k}=p_{j,k}+R_{j,k}$, and reconstruct the point values of $K$ and $L$ at the cell interfaces,
\begin{equation}
\begin{aligned}
K_{j,k}^{\rm E}&=K_{j,k}+\frac{\dx}{2}(K_x)_{j,k},&K_{j,k}^{\rm W}&=K_{j,k}-\frac{\dx}{2}(K_x)_{j,k},\\
L_{j,k}^{\rm N}&=L_{j,k}+\frac{\dy}{2}(L_y)_{j,k},&L_{j,k}^{\rm S}&=L_{j,k}-\frac{\dy}{2}(L_y)_{j,k},
\end{aligned}
\label{3.2}
\end{equation}
where $(K_x)_{j,k}$ and $(L_y)_{j,k}$ are the minmod limited slopes in the $x$- and $y$- directions, respectively.
Finally, the values obtained in \eref{3.2} are used to evaluate the point values of the pressure
\begin{equation*}
\begin{aligned}
p_{j,k}^{\rm E}&=K_{j,k}^{\rm E}-Q_{\jph,k},&p_{j,k}^{\rm W}&=K_{j,k}^{\rm W}-Q_{\jmh,k},\\
p_{j,k}^{\rm N}&=L_{j,k}^{\rm N}-R_{j,\kph},&p_{j,k}^{\rm S}&=L_{j,k}^{\rm S}-R_{j,\kmh},
\end{aligned}
\end{equation*}
and then the corresponding point values of $E$ are calculated from the EOS, $E=\frac{p}{\gamma-1}+\hf\rho(u^2+v^2)$.

We then estimate the one-sided local speeds of propagation in the $x$- and $y$- directions, respectively, using the smallest and largest
eigenvalues of the Jacobians $\partial\mf/\partial\mq$ and $\partial\mg/\partial\mq$:
\begin{equation*}
\begin{aligned}
a^+_{\jph,k}&=\max\left(u^{\rm E}_{j,k}+c^{\rm E}_{j,k},\,u^{\rm W}_{j+1,k}+c^{\rm W}_{j+1,k},\,0\right),\quad
&a^-_{\jph,k}&=\min\left(u^{\rm E}_{j,k}-c^{\rm E}_{j,k},\,u^{\rm W}_{j+1,k}-c^{\rm W}_{j+1,k},\,0\right),\\[0.3ex]
b^+_{j,\kph}&=\max\left(v^{\rm N}_{j,k}+c^{\rm N}_{j,k},\,v^{\rm S}_{j,k+1}+c^{\rm S}_{j,k+1},\,0\right),\quad
&b^-_{j,\kph}&=\min\left(v^{\rm N}_{j,k}-c^{\rm N}_{j,k},\,v^{\rm S}_{j,k+1}-c^{\rm S}_{j,k+1},\,0\right),
\end{aligned}
\end{equation*}
in terms of  the velocities, $u^{\rm E,W,N,S}_{j,k}$, and the speeds of sound, $c^{\rm E,W,N,S}_{j,k}$.

\medskip\noindent
{\bf Well-balanced evolution}. 
The cell-averages $\,\xbar\mq$ are evolved in time according to the following system of ODEs:
\begin{equation}
\frac{d}{dt}\,\xbar\mq_{j,k}=-\frac{\bm{\mathcal{F}}_{\jph,k}-\bm{\mathcal{F}}_{\jmh,k}}{\dx}-
\frac{\bm{\mathcal{G}}_{j,\kph}-\bm{\mathcal{G}}_{j,\kmh}}{\dy},
\label{3.10}
\end{equation}
where $\bm{\mathcal{F}}$ and $\bm{\mathcal{G}}$ are numerical fluxes. Introducing the notations
$$
\alpha_{\jph,k}:=\frac{a^+_{\jph,k}a^-_{\jph,k}}{a^+_{\jph,k}-a^-_{\jph,k}}\quad\mbox{and}\quad
\beta_{j,\kph}:=\frac{b^+_{j,\kph}b^-_{j,\kph}}{b^+_{j,\kph}-b^-_{j,\kph}},
$$
we write the components of $\bm{\mathcal {F}}_{\jph,k}$ and $\bm{\mathcal {G}}_{j,\kph}$ as
\begin{align*}
\mathcal{F}^{(1)}_{\jph,k}=&\,\frac{a^+_{\jph,k}(\rho u)_{j,k}^{\rm E}-a^-_{\jph,k}(\rho u)_{j+1,k}^{\rm W}}
{a^+_{\jph,k}-a^-_{\jph,k}}+\alpha_{\jph,k}H(\psi_{\jph,k})\cdot
\big(\rho_{j+1,k}^{\rm W}-\rho_{j,k}^{\rm E}-\delta\rho_{\jph,k}\big),\\[0.6ex]
\mathcal{F}^{(2)}_{\jph,k}=&\,\frac{a^+_{\jph,k}\left(\rho_{j,k}^{\rm E}(u_{j,k}^{\rm E})^2+K_{j,k}^{\rm E}\right)-
a^-_{\jph,k}(\rho_{j+1,k}^{\rm W}(u_{j+1,k}^{\rm W})^2+K_{j+1,k}^{\rm W})}{a^+_{\jph,k}-a^-_{\jph,k}}\\[0.5ex]
&+\,\alpha_{\jph,k}\big((\rho u)_{j+1,k}^{\rm W}-(\rho u)_{j,k}^{\rm E}-\delta(\rho u)_{\jph,k}\big),\\[0.8ex]
\mathcal{F}^{(3)}_{\jph,k}=&\,\frac{a^+_{\jph,k}\rho_{j,k}^{\rm E}u_{j,k}^{\rm E}v_{j,k}^{\rm E}-
a^-_{\jph,k}\rho_{j+1,k}^{\rm W}u_{j+1,k}^{\rm W}v_{j+1,k}^{\rm W}}{a^+_{\jph,k}-a^-_{\jph,k}}+
\alpha_{\jph,k}\big((\rho v)_{j+1,k}^{\rm W}-(\rho v)_{j,k}^{\rm E}-\delta(\rho v)_{\jph,k}\big),\\[0.8ex]
\mathcal{F}^{(4)}_{\jph,k}=&\frac{a^+_{\jph,k}u_{j,k}^{\rm E}(E_{j,k}^{\rm E}+(\rho\phi)_{j,k}^{\rm E}+p_{j,k}^{\rm E})-
a^-_{\jph,k}u_{j+1,k}^{\rm W}(E_{j+1,k}^{\rm W}+(\rho\phi)_{j+1,k}^{\rm W}+p_{j+1,k}^{\rm W})}{a^+_{\jph,k}-a^-_{\jph,k}}\\[0.5ex]
&+\alpha_{\jph,k}\Big[E_{j+1,k}^{\rm W}-E_{j,k}^{\rm E}+H(\psi_{\jph,k})\cdot\big((\rho\phi)_{j+1,k}^{\rm W}
-(\rho\phi)_{j,k}^{\rm E}-\delta(E+\rho\phi)_{\jph,k}\big)\Big],\\[0.8ex]
\mathcal{G}^{(1)}_{j,\kph}=&\,\frac{b^+_{j,\kph}(\rho v)_{j,k}^{\rm N}-b^-_{j,\kph}(\rho v)_{j,k+1}^{\rm S}}
{b^+_{j,\kph}-b^-_{j,\kph}}+\beta_{j,\kph}H(\psi_{j,\kph})\cdot
\big(\rho_{j,k+1}^{\rm S}-\rho_{j,k}^{\rm N}-\delta\rho_{j,\kph}\big),\\[0.8ex]
\mathcal{G}^{(2)}_{j,\kph}=&\,\frac{b^+_{j,\kph}\rho_{j,k}^{\rm N}u_{j,k}^{\rm N}v_{j,k}^{\rm N}-
b^-_{j,\kph}\rho_{j,k+1}^{\rm S}u_{j,k+1}^{\rm S}v_{j,k+1}^{\rm S}}{b^+_{j,\kph}-b^-_{j,\kph}}+
\beta_{j,\kph}\big((\rho u)_{j,k+1}^{\rm S}-(\rho u)_{j,k}^{\rm N}-\delta(\rho u)_{j,\kph}\big),\\[0.8ex]
\mathcal{G}^{(3)}_{j,\kph}=&\,\frac{b^+_{j,\kph}\left(\rho_{j,k}^{\rm N}(v_{j,k}^{\rm N})^2+L_{j,k}^{\rm N}\right)-
b^-_{j,\kph}\left(\rho_{j,k+1}^{\rm S}(v_{j,k+1}^{\rm S})^2+L_{j,k+1}^{\rm S}\right)}{b^+_{j,\kph}-b^-_{j,\kph}}\\[0.5ex]
& +\,\beta_{j,\kph}\big((\rho v)_{j,k+1}^{\rm S}-(\rho v)_{j,k}^{\rm N}-\delta(\rho v)_{j,\kph}\big),\\[0.8ex]
\mathcal{G}^{(4)}_{j,\kph}=&\frac{b^+_{j,\kph}v_{j,k}^{\rm N}(E_{j,k}^{\rm N}+(\rho\phi)_{j,k}^{\rm N}+p_{j,k}^{\rm N})-
b^-_{j,\kph}v_{j,k+1}^{\rm S}(E_{j,k+1}^{\rm S}+(\rho\phi)_{j,k+1}^{\rm S}+p_{j,k+1}^{\rm S})}{b^+_{j,\kph}-b^-_{j,\kph}}\\[0.5ex]
&+\beta_{j,\kph}\big(E_{j,k+1}^{\rm S}-E_{j,k}^{\rm N}+H(\psi_{j,k+\hf})\cdot\big((\rho\phi)_{j,k+1}^{\rm S}
-(\rho\phi)_{j,k}^{\rm N}-\delta(E+\rho\phi)_{j,\kph}\big),
\end{align*}
where
$(\rho\phi)_{j,k}^{\rm E}=\rho_{j,k}^{\rm E}\phi(x_{j+\hf},y_k),\ (\rho\phi)_{j,k}^{\rm W}=\rho_{j,k}^{\rm W}\phi(x_{j-\hf},y_k)$,
$(\rho\phi)_{j,k}^{\rm N}=\rho_{j,k}^{\rm N}\phi(x_j,y_{k+\hf}),\ (\rho\phi)_{j,k}^{\rm S}=\rho_{j,k}^{\rm S}\phi(x_j,y_{k-\hf})$,
$\psi_{j,k+\hf}=\frac{|K_{j+1,k}-K_{j,k}|}{\dx}\cdot\frac{x_{k_r+\frac{1}{2}}-x_{k_\ell-\frac{1}{2}}}{\max_{j,k}\{K_{j,k},K_{j+1,k}\}}$,
$\psi_{j+\hf,k}=\frac{|L_{j,k+1}-L_{j,k}|}{\dy}\cdot\frac{y_{k_r+\frac{1}{2}}-y_{k_\ell-\frac{1}{2}}}{\max_{j,k}\{L_{j,k},L_{j,k+1}\}}$, 
the function $H$ is defined, as before, in \eref{hfnc}, and
\begin{equation}
\delta\mq_{\jph,k}={\rm minmod}\big(\mq_{j+1,k}^{\rm W}-\mq_{\jph,k}^*,\,\mq_{\jph,k}^*-\mq_{j,k}^{\rm E}\big)
\label{adx}
\end{equation}
and
\begin{equation}
\delta\mq_{j,\kph}={\rm minmod}\big(\mq_{j,k+1}^{\rm S}-\mq_{j,\kph}^*,\,\mq_{j,\kph}^*-\mq_{j,k}^{\rm N}\big)
\label{ady}
\end{equation}
are build-in anti-diffusion terms with
$$
\mq_{\jph,k}^*=\frac{a^+_{\jph,k}\mq_{j+1,k}^{\rm W}-a^-_{\jph,k}\mq_{j,k}^{\rm E}-\big\{\mf(\mq_{j+1,k}^{\rm W})-\mf(\mq_{j,k}^{\rm E})
\big\}}{a^+_{\jph,k}-a^-_{\jph,k}}
$$
and
$$
\mq_{j,\kph}^*=\frac{b^+_{j,\kph}\mq_{j,k+1}^{\rm S}-b^-_{j,\kph}\mq_{j,k}^{\rm N}-\big\{\mg(\mq_{j,k+1}^{\rm S})-\mg(\mq_{j,k}^{\rm N})
\big\}}{b^+_{j,\kph}-b^-_{j,\kph}}.
$$
Notice that the anti-diffusion terms \eref{adx} and \eref{ady} can be rigorously derived from the fully discrete CU framework along the 
lines of (though slightly different from) \cite{KLin}.
{\color{black} Namely, 
while $\delta\mq_{\jph,k}$ and $\delta\mq_{j,\kph}$ in \cite{KLin} are obtained by computing the slopes in the piecewise linear
interpolant over the ``side'' domains $D_{\jph,k}$ and $D_{j,\kph}$ 
using the point values at their corners,
here the corresponding  terms are obtained from the same interpolants using the values at the midpoints of the long sides of the rectangles
$D_{\jph,k}$ and $D_{j,\kph}$. 
}

We can now state the following well-balanced property of the proposed 2-D CU scheme, whose proof is similar to that of Theorem \ref{thm2.1}
{\color{black}and thus it is presented in Appendix \ref{prf}}.
\begin{theorem}\label{thm3.1}
The 2-D semi-discrete CU scheme described  above is well-balanced in the sense that it exactly
preserves the steady state \eref{1.11}.
\end{theorem}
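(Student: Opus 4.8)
The plan is to mirror the one-dimensional argument of Theorem~\ref{thm2.1}, while carefully tracking the transverse momentum fluxes that have no 1-D analogue. I would start by positing the discrete steady-state data: all reconstructed velocities vanish, $u_{j,k}^{\rm E,W,N,S}=v_{j,k}^{\rm E,W,N,S}=0$; the reconstructed values of $K$ are $x$-independent, so that $(K_x)_{j,k}=0$ and hence $K_{j,k}^{\rm E}=K_{j,k}^{\rm W}=K_{j,k}=K_{j+1,k}$; and symmetrically the values of $L$ are $y$-independent, so that $(L_y)_{j,k}=0$ and $L_{j,k}^{\rm N}=L_{j,k}^{\rm S}=L_{j,k}=L_{j,k+1}$. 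This is the discrete counterpart of \eqref{1.11}. Exactly as in the proof of Theorem~\ref{thm2.1}, $u\equiv0$ forces $a^+_{\jph,k}=-a^-_{\jph,k}$ and $v\equiv0$ forces $b^+_{j,\kph}=-b^-_{j,\kph}$. It then suffices to show that every $\mathcal F^{(i)}_{\jph,k}$ takes the same value at the two vertical interfaces of each row (so its $x$-difference vanishes) and every $\mathcal G^{(i)}_{j,\kph}$ takes the same value at the two horizontal interfaces of each column, whence the right-hand side of \eqref{3.10} vanishes componentwise.

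Next I would dispatch the $x$-fluxes $\mathcal F^{(i)}_{\jph,k}$. For $i=1$ and $i=3$ the upwind part carries a factor $u$ or $v$ and so vanishes, while the anti-diffusion part vanishes because $\rho u\equiv\rho v\equiv0$ (hence $\delta\rho$ and $\delta(\rho v)$ are zero), and for $i=1$ additionally because the steady-state data annihilate the equilibrium-variable difference in the argument of the cut-off, giving $H(\psi_{\jph,k})=H(0)=0$. For $i=2$, using $a^+_{\jph,k}=-a^-_{\jph,k}$ the upwind part collapses to $\tfrac12\big(K_{j,k}^{\rm E}+K_{j+1,k}^{\rm W}\big)=K_{j,k}$ and the anti-diffusion term (acting on $\rho u\equiv0$) drops out, so $\mathcal F^{(2)}_{\jph,k}=K_{j,k}$ is the same at every vertical interface of row $k$. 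For $i=4$, the upwind part vanishes with $u$, the factor $H(\psi_{\jph,k})=0$ removes the $\rho\phi$ anti-diffusion, and the leftover $\alpha_{\jph,k}\big(E_{j+1,k}^{\rm W}-E_{j,k}^{\rm E}\big)$ also vanishes: since $u=v=0$ gives $E=p/(\gamma-1)$, and $p_{j,k}^{\rm E}=K_{j,k}^{\rm E}-Q_{\jph,k}$, $p_{j+1,k}^{\rm W}=K_{j+1,k}^{\rm W}-Q_{\jph,k}$ share the same interface integral $Q_{\jph,k}$, the equality $K_{j,k}^{\rm E}=K_{j+1,k}^{\rm W}$ forces $E_{j,k}^{\rm E}=E_{j+1,k}^{\rm W}$.

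The $y$-fluxes $\mathcal G^{(i)}_{j,\kph}$ follow by the verbatim $x\leftrightarrow y$, $u\leftrightarrow v$, $K\leftrightarrow L$, $Q\leftrightarrow R$ substitution: components $i=1,2$ vanish; component $i=3$ collapses to $\tfrac12\big(L_{j,k}^{\rm N}+L_{j,k+1}^{\rm S}\big)=L_{j,k}$, the same at every horizontal interface of column $j$; and component $i=4$ vanishes because $p_{j,k}^{\rm N}=L_{j,k}^{\rm N}-R_{j,\kph}$ and $p_{j,k+1}^{\rm S}=L_{j,k+1}^{\rm S}-R_{j,\kph}$ share the integral $R_{j,\kph}$, forcing $E_{j,k}^{\rm N}=E_{j,k+1}^{\rm S}$. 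Having shown each $\mathcal F^{(i)}$ and each $\mathcal G^{(i)}$ is constant along its differencing direction, both differences in \eqref{3.10} vanish and the steady state \eqref{1.11} is preserved.

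The main obstacle, relative to the 1-D case, is conceptual rather than computational. In 1-D the equilibrium variable $L$ was globally constant ($\equiv\widehat L$), whereas here $K$ and $L$ are only \emph{directionally} constant, so the pressure-flux components $\mathcal F^{(2)}$ and $\mathcal G^{(3)}$ are nonzero and equal the locally constant values $K_{j,k}$ and $L_{j,k}$. The cancellation now hinges on differencing each of these in the one direction along which it is constant, while the transverse momentum fluxes $\mathcal G^{(2)}$ (the $\rho uv$ flux) and $\mathcal F^{(3)}$ must be seen to vanish identically so that no spurious coupling survives between the two momentum balances. The only place where the special equilibrium-variable reconstruction is genuinely used is the pair of energy-flux cancellations in components $i=4$: they succeed precisely because neighboring reconstructed pressures are built from the \emph{same} cell-interface value of the global variable, $Q_{\jph,k}$ or $R_{j,\kph}$, which is exactly the structural property arranged by \eqref{RRR2} and \eqref{3.2}.
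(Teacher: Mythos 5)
Your proposal is correct and follows essentially the same route as the paper's own proof (Appendix A): show that each $\mathcal F^{(i)}_{\jph,k}$ depends only on $k$ and each $\mathcal G^{(i)}_{j,\kph}$ only on $j$, using $H(\psi)=H(0)=0$ for the density and energy components, the directional constancy of $K$ and $L$ for the momentum components, and the fact that neighboring reconstructed pressures share the same interface value $Q_{\jph,k}$ (resp.\ $R_{j,\kph}$) to cancel the energy-flux term. One minor slip: $\delta\rho_{\jph,k}$ does \emph{not} vanish in general (with $a^+_{\jph,k}=-a^-_{\jph,k}$ and $\rho u\equiv0$ it equals half the density jump across the interface), but this is harmless because the entire anti-diffusion bracket in $\mathcal F^{(1)}_{\jph,k}$ is multiplied by $H(\psi_{\jph,k})=0$, which you correctly invoke.
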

\ifx
\begin{proof}
The proof is similar to the proof of Theorem \ref{thm2.1}.$\hfill\blacksquare$
\end{proof}
\fi

\section{Numerical Examples}\label{sec4}
In this section, we present a number of 1-D and 2-D numerical examples, in which we demonstrate the performance of the proposed 
well-balanced semi-discrete CU scheme. 

In all of the examples below, we have used the three-stage third-order strong stability preserving (SSP) Runge-Kutta method (see, e.g.,
\cite{GKS,GST,SO}) to solve the ODE systems \eref{2.1a} and \eref{3.10}. The CFL number has been set to 0.4. Also, we have used the
following constant values: the minmod parameter $\theta=1.3$ and the specific heat ratio $\gamma=1.4$. 

In Examples 2--4, the initial data correspond to the 1-D and 2-D steady-state solutions and their small perturbations. The designed
well-balanced CU schemes are capable of exactly preserving discrete versions of the 1-D and 2-D steady states \eqref{ss1} and \eqref{1.11},
respectively. In Appendix \ref{app1}, we provide a detailed description of how to construct such steady states.

\subsection{One-Dimensional Examples}\label{sec41}
In all of the 1-D numerical experiments, we use a uniform mesh with the total number of grid cells $N=k_r-k_\ell+1$.

\paragraph{Example 1---Shock Tube Problem.}
The first example is a modification of the Sod shock tube problem taken from \cite{LXL,XinShu13}. We solve the system
\eref{g11}--\eref{eos1} with $\phi(y)=y$ in the computational domain $[0,1]$ using the following initial data:
\begin{equation*}
(\rho(y,0),v(y,0),p(y,0))=\left\{\begin{array}{lr}(1,0,1),&\textrm{ if }y\le0.5,\\(0.125,0,0.1),&\textrm{ if }y>0.5,\end{array}\right.
\end{equation*}
and reflecting boundary conditions at both ends of the computational domain. These boundary conditions are implemented using the ghost
cell technique as follows:
$$
\begin{aligned}
&\xbar\rho_{k_\ell-1}:=\xbar\rho_{k_\ell},\quad v_{k_\ell-1}:=-v_{k_\ell},\quad L_{k_\ell-1}:=L_{k_\ell},\\
&\xbar\rho_{k_r+1}:=\xbar\rho_{k_r},\quad v_{k_r+1}:=-v_{k_r},\quad L_{k_r+1}:=L_{k_r}.
\end{aligned}
$$
We compute the solution using $N=100$ uniformly placed grid cells and compare it with the reference solution obtained using $N=2000$ uniform
cells. In Figure \ref{fig41}, we plot both the coarse and fine grid solutions at time $T=0.2$. As one can see, the proposed CU scheme
captures the solutions on coarse mesh quite well showing a good agreement with both the reference solution and the results obtained in 
\cite{LXL,XinShu13}.
\begin{figure}[ht!]
\centerline{\scalebox{0.40}{\includegraphics{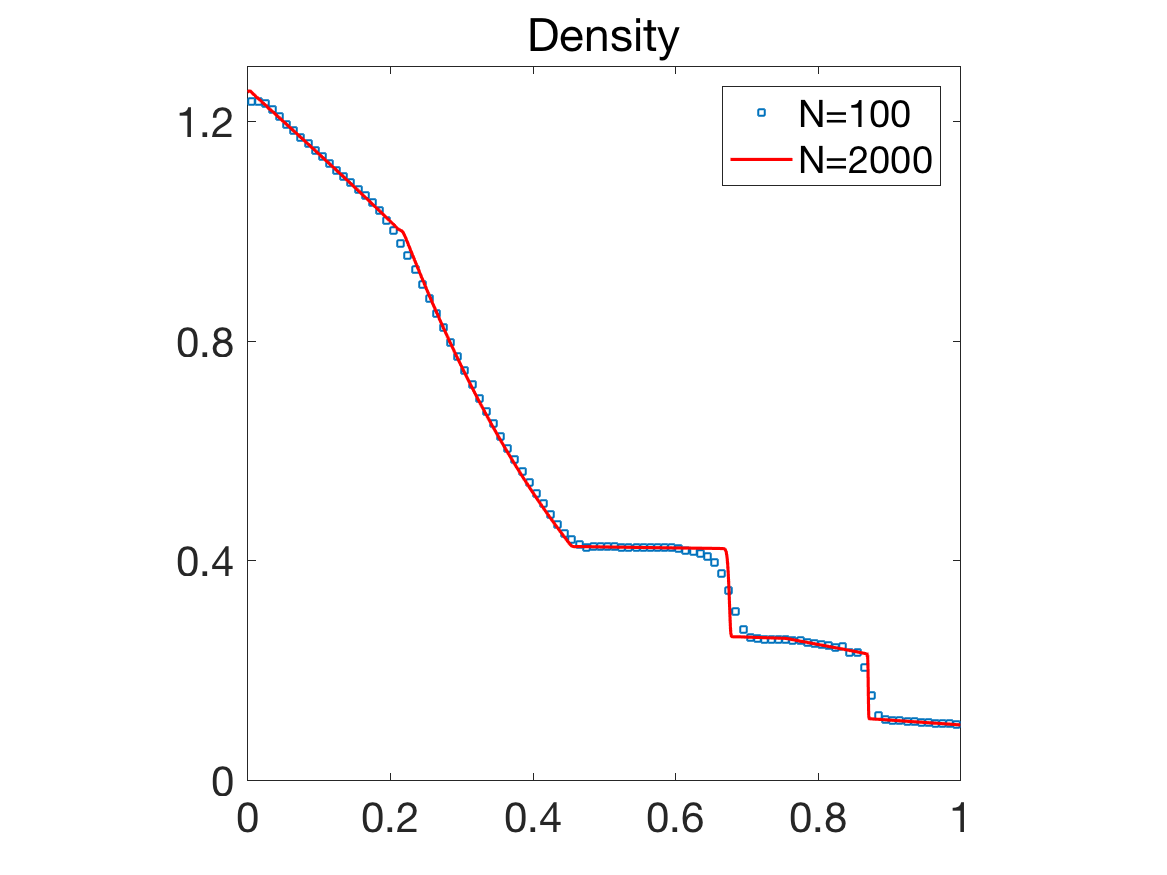}\hspace*{-0.0cm}{\includegraphics{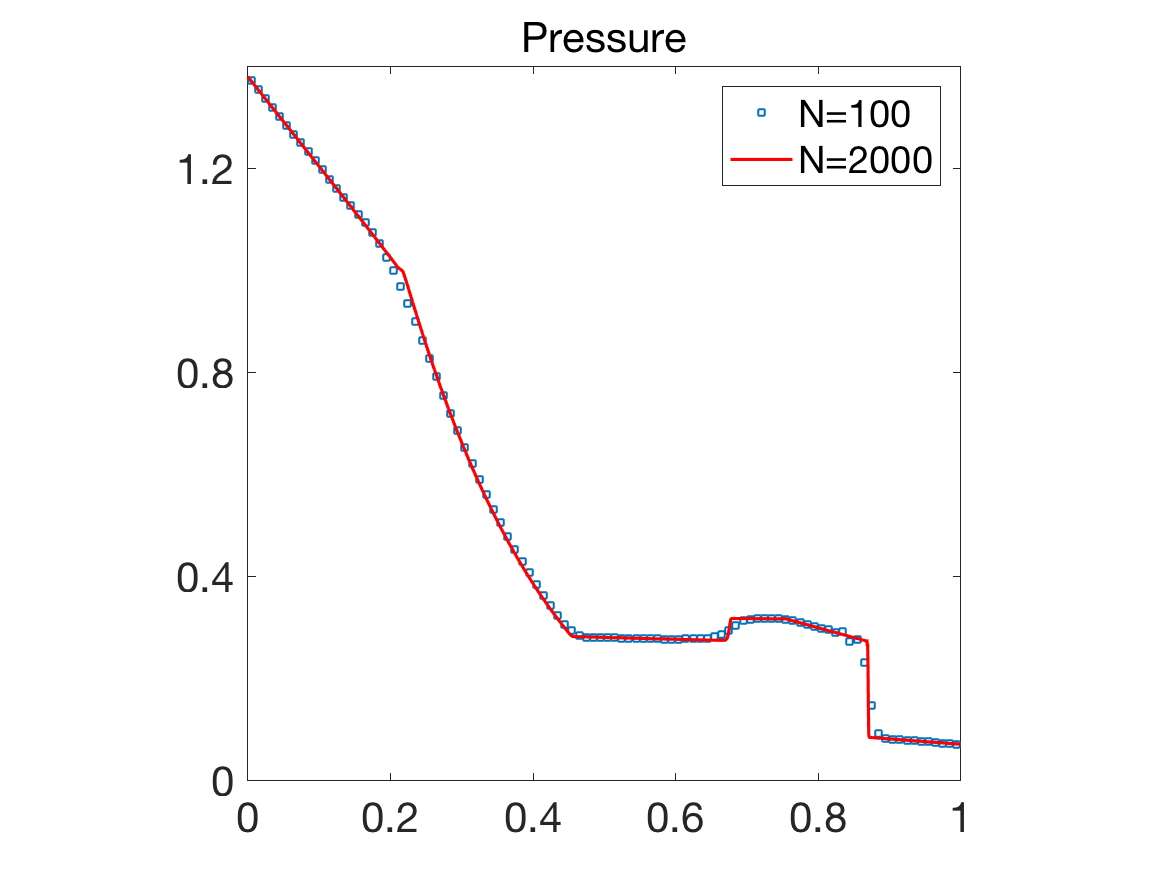}}}}
\vspace*{0.4cm}
\centerline{\scalebox{0.40}{\includegraphics{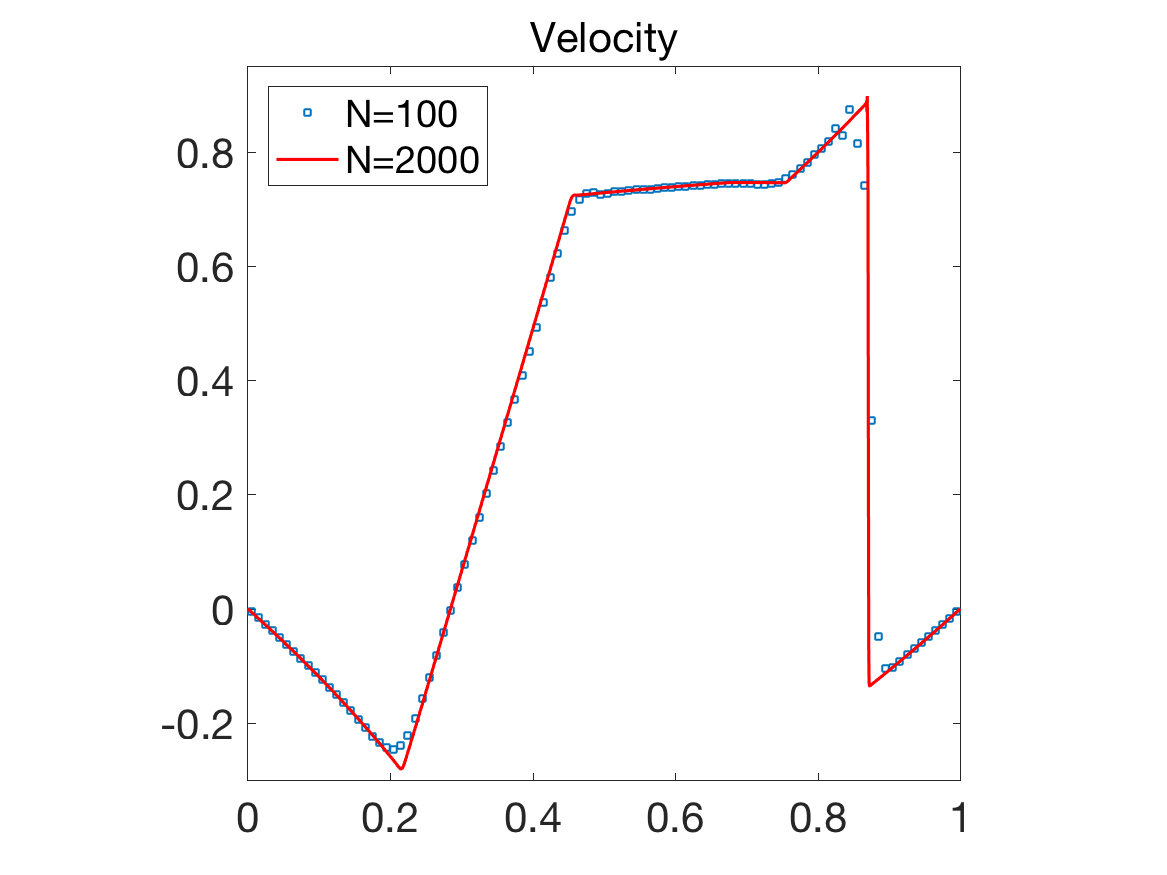}\hspace*{-0.0cm}{\includegraphics{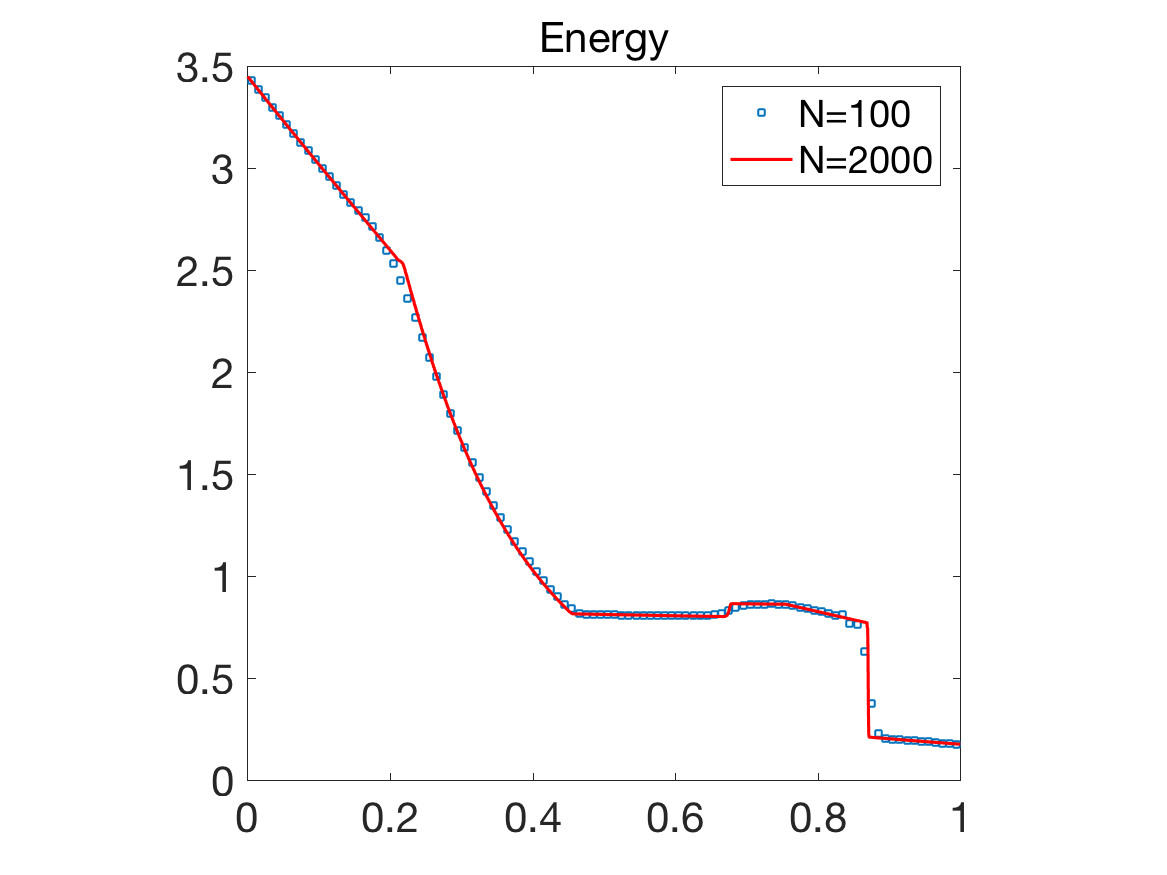}}}}
\caption{\sf Example 1: Solutions computed by the well-balanced CU scheme using $N=100$ and 2000 cells.\label{fig41}}
\end{figure}

{\color{black}Notice that the coarse mesh solution features some oscillations in the vicinity of the shock. These oscillations, however, seem
to be the so-called ``WENO-type oscillations'' as they disappear when the mesh is refined.}

\paragraph{Example 2---Isothermal Equilibrium Solution.} In the second example, taken from \cite{XinShu13} (see also
\cite{LeVBale,LXL,TXCD}), we test the ability of the proposed CU scheme to accurately capture small perturbations of the steady state
\begin{equation}
\rho(y)=e^{-\phi(y)},\quad v(y)\equiv0,\quad p(y)=e^{-\phi(y)},
\label{icss}
\end{equation}
of the system \eref{g11}--\eref{eos1} with the linear gravitational potential $\phi(y)=y$ (in fact, we use a discrete version of this 
steady state with $L(y)\equiv1$, obtained as described in Appendix \ref{a1}).

We take the computational domain $[0,1]$ and use a zero-order extrapolation at the boundaries:
$$
\begin{aligned}
\begin{aligned}
&\xbar\rho_{k_\ell-1}:=\xbar\rho_{k_\ell}e^{\dy(\phi_y)_{k_\ell}},\quad v_{k_\ell-1}:=v_{k_\ell},\quad L_{k_\ell-1}:=L_{k_\ell},\\
&\xbar\rho_{k_r+1}:=\xbar\rho_{k_r}e^{-\dy(\phi_y)_{k_r}},\quad v_{k_r+1}:=v_{k_r},\quad L_{k_r+1}:=L_{k_r}.
\end{aligned}
\end{aligned}
$$
Note that the boundary conditions on $L$ can be recast in terms of $p$ and $\rho$ as
$$
p_{k_\ell-1}=p_{k_\ell}+\dy\,\xbar\rho_{k_\ell}(\phi_y)_{k_\ell},\quad p_{k_r+1}=p_{k_r}-\dy\,\xbar\rho_{k_r}(\phi_y)_{k_r}.
$$

We first numerically verify that the proposed CU scheme is capable of exactly preserving the steady state \eref{icss}. We use several
uniform grids {\color{black} at time $T=1$} and observe that the initial conditions are preserved within the machine accuracy, while the errors in the non-well-balanced 
computations are of the second order of accuracy as shown in Table \ref{ex2-table}.
\begin{table}[ht!]
\centerline{
\begin{tabular}{|c|c|c|c|c|c|c|}\hline 
$N$&${\|\rho(\cdot,1)-\rho(\cdot,0)\|}_1$&rate&${\|(\rho v)(\cdot,1)-(\rho v)(\cdot,0)\|}_1$&rate&${\|E(\cdot,1)-E(\cdot,0)\|}_1$&rate\\ 
\hline 
100&1.47E-06&--&3.62E-06&--& 4.25E-06&--\\
200&3.79E-07&1.95&9.07E-07&1.99&1.08E-06&1.99\\
400&9.62E-08&1.98&2.27E-07&1.99&2.72E-07&1.97\\
800&2.42E-08&1.99&5.69E-08&1.99&6.84E-08&1.99\\
\hline 
\end{tabular} 
}
\caption{\sf Example 2: $L^1$-errors and corresponding experimental convergence rates in the non-well-balanced computation of $\rho$,
$\rho v$ and $E$; $\phi(y)=y$.\label{ex2-table}}
\end{table}

Next, we introduce a small initial pressure perturbation and consider the system \eref{g11}--\eref{eos1} subject to the following initial
data:
\begin{equation*}
\rho(y,0)=e^{-y},\quad v(y,0)\equiv0,\quad p(y,0)=e^{-y}+\eta e^{-100(y-0.5)^2},
\end{equation*}
where $\eta$ is a small positive number. In the numerical experiments, we use larger ($\eta=10^{-2}$) and smaller ($\eta=10^{-6}$)
perturbations.

We first apply the proposed well-balanced CU scheme to this problem and compute the solution at time $T=0.25$. The obtained pressure
perturbation ($p(y,0.25)-e^{-y}$) computed using $N=200$ and 2000 (reference solution) uniform grid cells are plotted in Figure \ref{fig42}
for both $\eta=10^{-2}$ and $10^{-6}$. As one can see, the scheme accurately captures both small and large perturbations on a relatively
coarse mesh with $N=200$. In order to demonstrate the importance of the well-balanced property, we apply the non-well-balanced CU scheme
described in the beginning of \S\ref{sec2} to the same initial-boundary value problem (IBVP). The obtained results are shown in Figure
\ref{fig42} as well. It should be observed that while the larger perturbation is quite accurately computed by both schemes, the 
non-well-balanced CU scheme fails to accurately capture the smaller one.
\begin{figure}[ht!]
\centerline{\scalebox{0.45}{\includegraphics[width=1.1\textwidth]{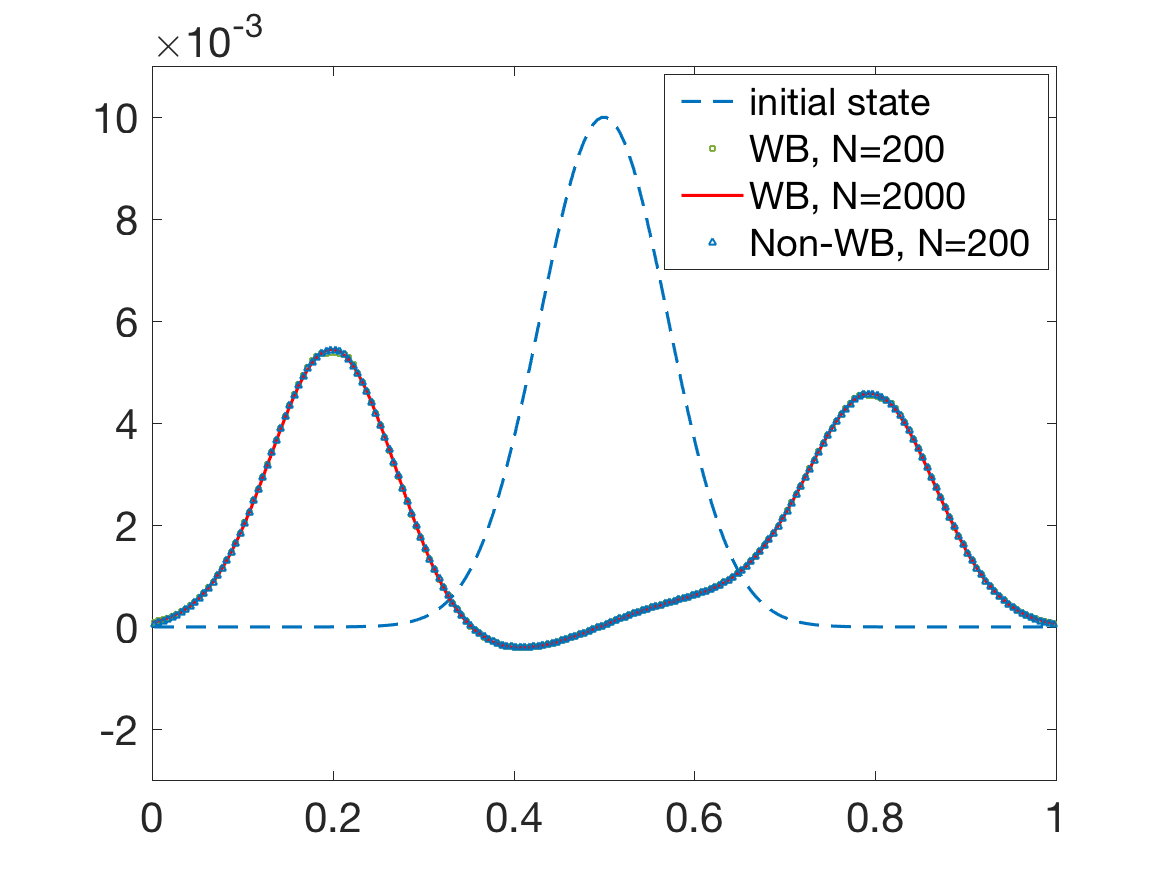}\hspace*{-0.0cm}{\includegraphics[width=1.1\textwidth]{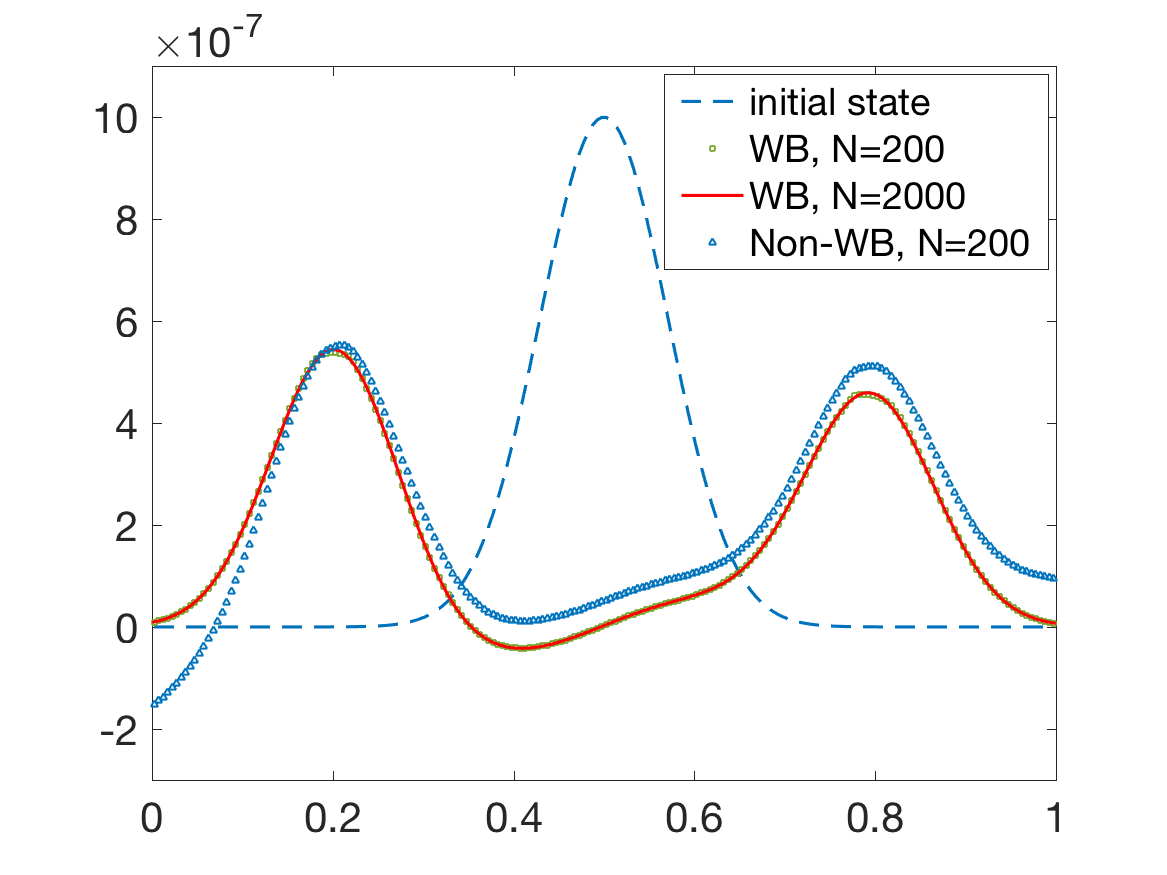}}}}
\caption{\sf Example 2: Pressure perturbation ($p(y,0.25)-e^{-y}$) computed by the well-balanced (WB) and non-well-balanced (Non-WB) CU
schemes with $N=200$ and 2000 for $\eta=10^{-2}$ (left) and $\eta=10^{-6}$ (right).\label{fig42}}
\end{figure}

{\color{black} Finally, we test the experimental rate of convergence of the proposed well-balanced CU scheme by computing
$$
\mbox{rate}_N:=\log_2\left(\frac{||q^{(i)}_N-q^{(i)}_{2N}||_1}{||q^{(i)}_{2N}-q^{(i)}_{4N}||_1}\right),\quad i=1,2,3,
$$
where $q^{(i)}_N$ denotes the $i$th component of the solution computed using a uniform mesh with $N$ cells. The obtained results, presented
in Tables \ref{ex2-tableL1} and \ref{ex2-tableL106}, demonstrate that the experimental rate of convergence is close to the expected
second-order one for both $\eta=10^{-2}$ and $\eta=10^{-6}$. 
\begin{table}[ht!]
\centerline{
\begin{tabular}{|c|c|c|c|c|c|c|}\hline 
$N$&${\|\rho_N-\rho_{2N}\|}_1$&rate$_N$&${\|(\rho v)_N-(\rho v)_{2N}\|}_1$&rate$_N$&${\|E_N-E_{2N}\|}_1$&rate$_N$\\ 
\hline 
100& 2.04E-05  &  -- &2.13E-05  &  -- & 7.67E-05  & --\\
200& 5.45E-06  &  1.90 &5.20E-06  &  2.03&1.91E-05  &  2.00\\
400& 1.33E-06  &  2.03 &1.08E-06  &  2.25&4.13E-06  &  2.21\\
800& 3.53E-07  &  1.91 &2.36E-07  &  2.20&9.46E-07  &  2.12\\
\hline 
\end{tabular} 
}
\caption{\sf Example 2: $L^1$-errors and experimental convergence rates for the well-balanced CU scheme; $\eta=10^{-2}$.\label{ex2-tableL1}}
\end{table}
\begin{table}[ht!]
\centerline{
\begin{tabular}{|c|c|c|c|c|c|c|}\hline 
$N$&${\|\rho_N-\rho_{2N}\|}_1$&rate$_N$&${\|(\rho v)_N-(\rho v)_{2N}\|}_1$&rate$_N$&${\|E_N-E_{2N}\|}_1$&rate$_N$\\ 
\hline 
100 & 3.53E-09  &  -- &3.52E-09 &  -- & 2.43E-09 & --\\
200 & 1.01E-09  &  1.80 &8.53E-10  &  2.04&6.09E-10  &  1.99\\
400 & 1.91E-10  &  2.41 &1.80E-10  &  2.24&1.28E-10  &  2.24\\
800 & 5.34E-11  &  1.84 &3.86E-11  &  2.22& 2.76E-11  &  2.21\\
\hline 
\end{tabular} 
}
\caption{\sf Example 2: Same as Table \ref{ex2-tableL1}, but for $\eta=10^{-6}$.\label{ex2-tableL106}}
\end{table}
}
\paragraph{Example 3---Nonlinear Gravitational Potential.} In this example, we consider the system \eref{g11}--\eref{eos1} with the
nonlinear gravitational potentials $\phi(y)=\hf y^2$ and $\phi(y)=\sin(2\pi y)$ subject to the steady-state initial data 
\begin{equation}
\rho(y,0)=e^{-\phi(y)},\quad v(y,0)\equiv0,\quad p(y,0)=e^{-\phi(y)},
\label{icss2}
\end{equation}
(once again, we use discrete versions of these steady states with $L(y)\equiv1$; see Appendix \ref{a1}) with the same boundary conditions as
in Example 2. We first apply both the well-balanced and non-well-balanced CU schemes to this IBVP and compute the solution on a sequence of
different meshes until the final time $T=1$. We observe that the while the well-balanced scheme preserves the steady state \eref{icss2}
within the machine accuracy, the errors in the non-well-balanced computations are of order of the scheme. 

We, next, consider the same IBVP but with the following perturbed initial data:
\begin{equation*}
\rho(y,0)=e^{-\phi(y)},\quad v(y,0)\equiv0,\quad p(y,0)=e^{-\phi(y)}+\eta e^{-100(y-0.5)^2},\quad\eta=10^{-3}.
\end{equation*}
We compute the solution until the final time $T=0.25$ using both the well-balanced and non-well-balanced CU schemes. In Figure \ref{fig:p03}
(left), we plot the pressure perturbations ($p(y,0.25)-e^{-\phi(y)}$) computed using the well-balanced scheme with $N=200$ and $N=2000$
(reference solution) uniform grid cells for $\phi(y)=\hf y^2$. For comparison, we plot the same perturbation computed by applying
non-well-balanced CU scheme with $N=200$ uniform grid points. In Figure \ref{fig:p03} (right), we also include the results obtained by
non-well-balanced CU scheme using a much finer mesh. One can conclude that only the well-balanced scheme can accurately capture the
perturbation on a coarse grid, while a very fine mesh is required to control the perturbation with the non-well-balanced method. 

In Figure \ref{fig:sin_p03}, we demonstrate the pressure perturbation ($p(y,0.25)-e^{-\phi(y)}$) obtained using both the well-balanced and
non-well-balanced CU schemes on $N=200$ uniform grid cells for $\phi(y)=\sin(2\pi y)$. Similarly with the previous discussion, the proposed
well-balanced CU scheme accurately resolves the perturbation on a coarse grid, while the non-well-balanced scheme requires much finer grid
to capture the perturbation as accurately as the well-balanced method does. 
\begin{figure}[ht!]
\centerline{\scalebox{0.45}{\includegraphics[width=1.1\textwidth]{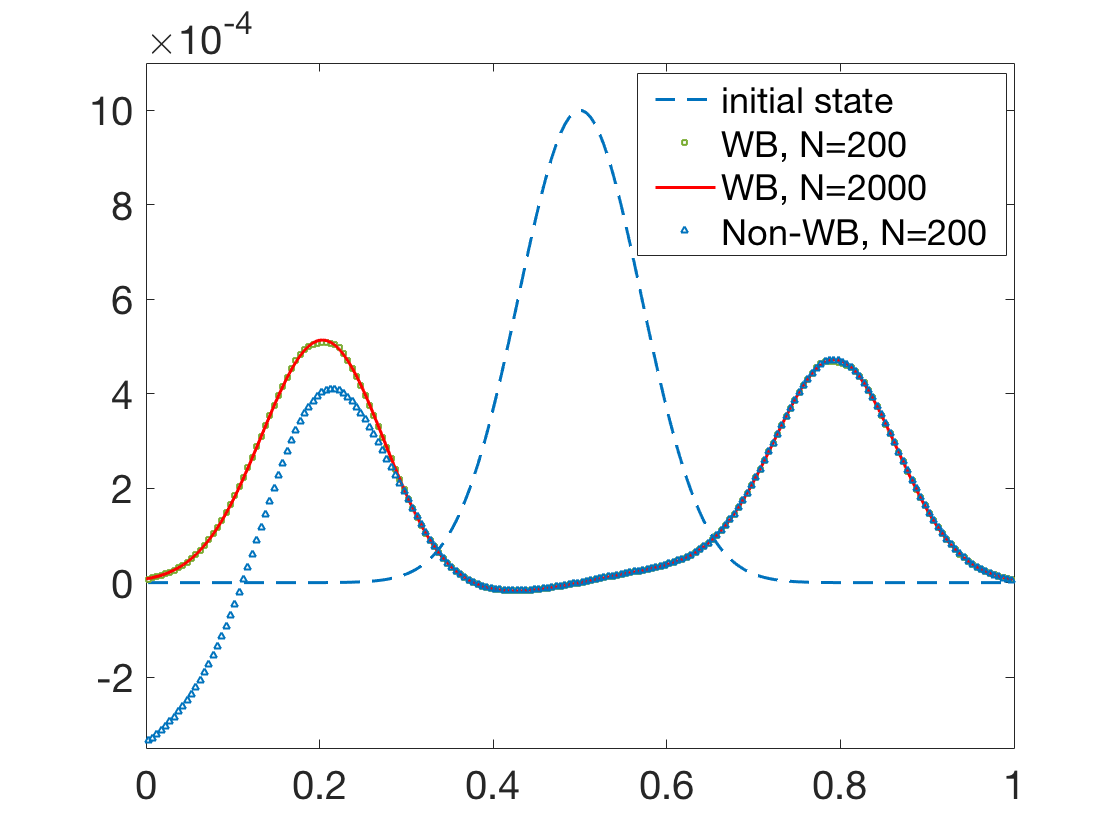}\hspace*{-0.0cm}
{\includegraphics[width=1.1\textwidth]{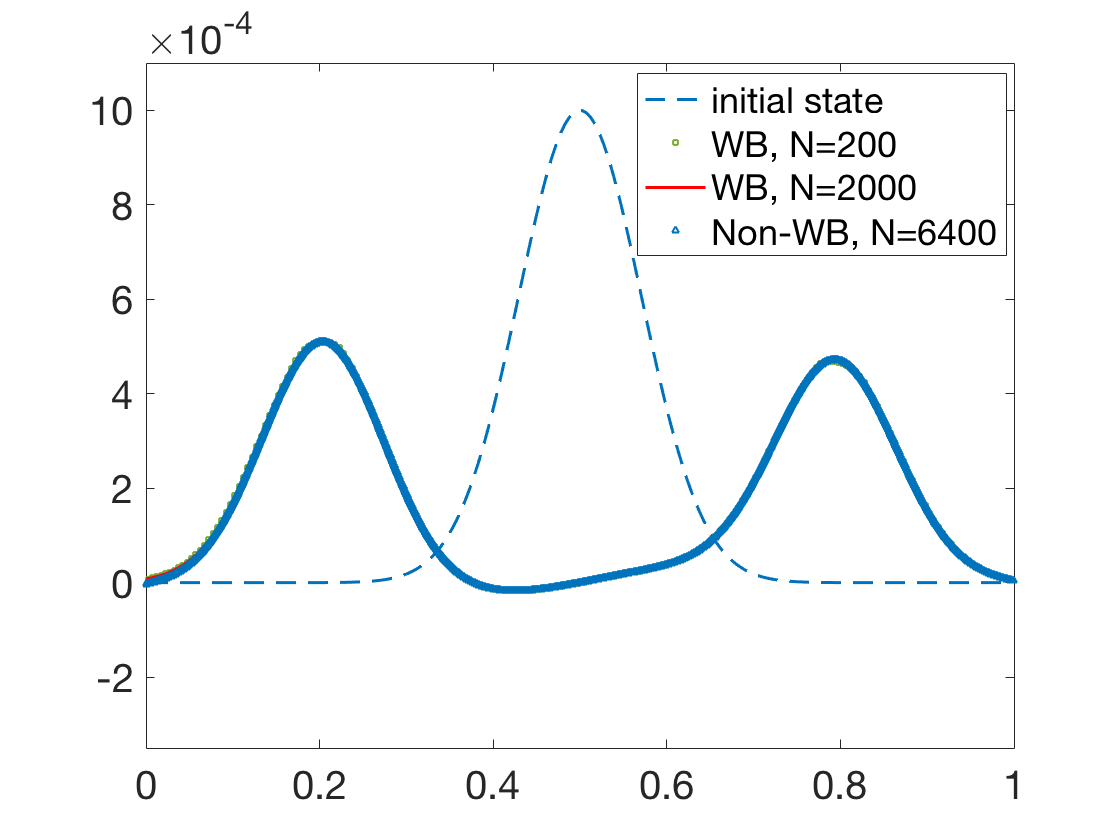}}}}
\caption{\sf Example 3: Pressure perturbation ($p(y,0.25)-e^{-\phi(y)}$) computed by the well-balanced (WB) and non-well-balanced (Non-WB)
CU schemes for $\phi(y)=\hf y^2$ with $N=200$ for each scheme (left) and $N=4000$ for the Non-WB scheme (right). The reference solution is
computed using the WB scheme with $N=2000$ grid points.\label{fig:p03}}
\end{figure}
\begin{figure}[ht!]
\centerline{\scalebox{0.45}{\includegraphics[width=1.1\textwidth]{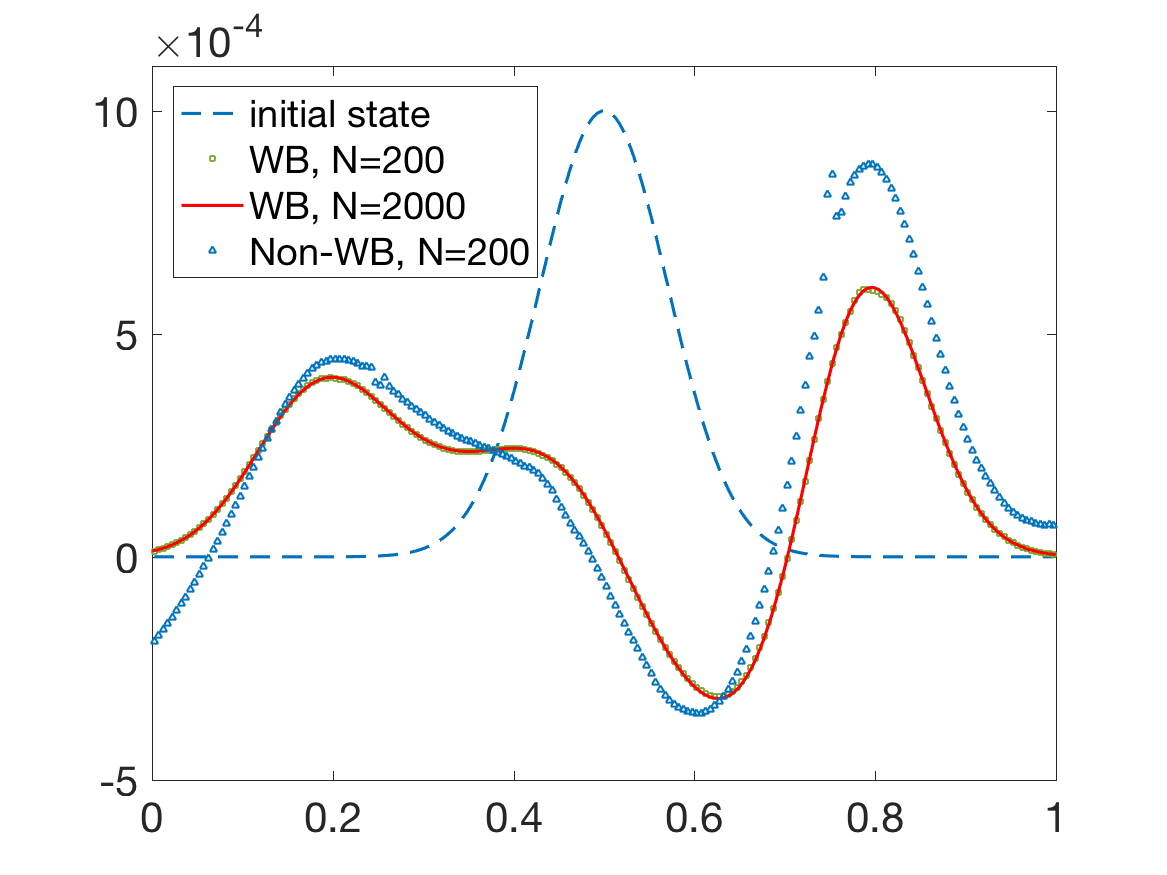}\hspace*{-0.0cm}
{\includegraphics[width=1.1\textwidth]{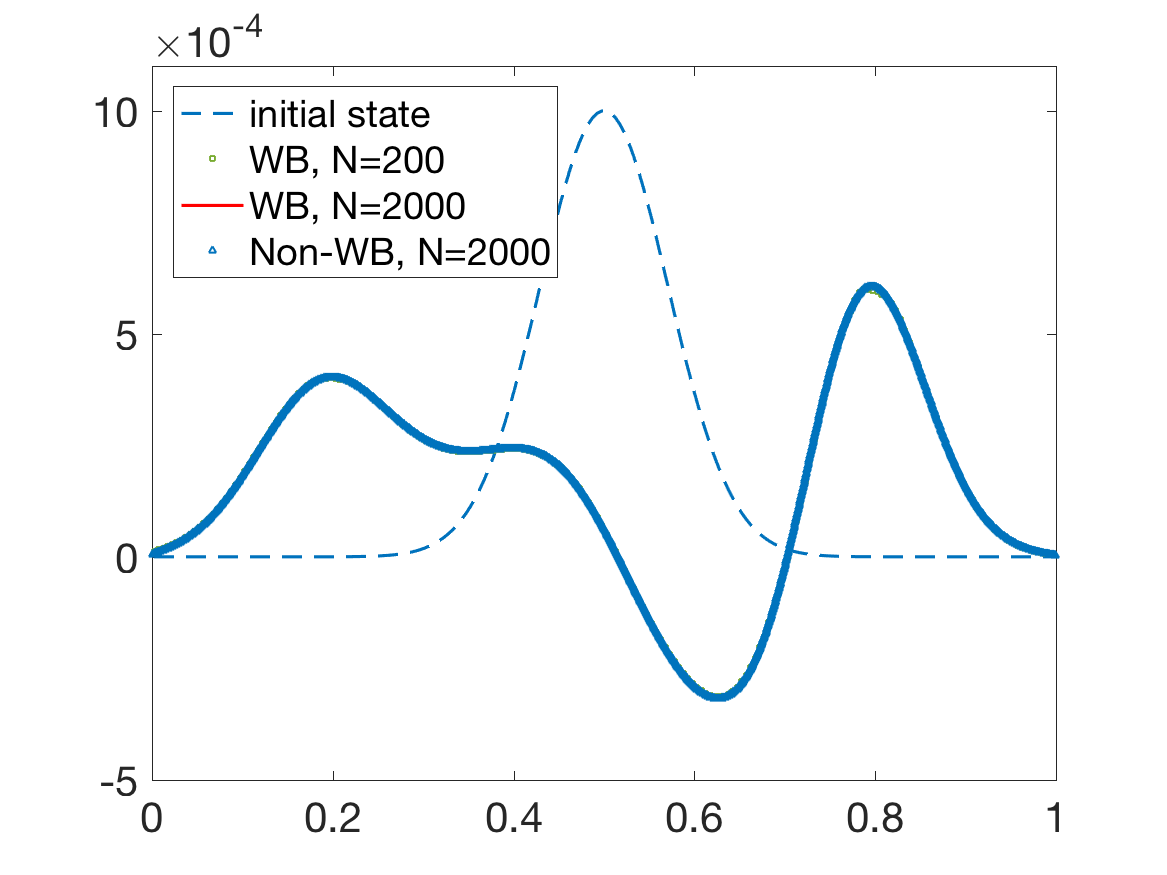}}}}
\caption{\sf Example 3: Pressure perturbation ($p(y,0.25)-e^{-\phi(y)}$) computed by the well-balanced (WB) and non-well-balanced (Non-WB)
CU schemes for $\phi(y)=\sin(2\pi y)$ with $N=200$ for each scheme (left) and $N=2000$ for the Non-WB scheme (right). The reference solution
is computed using the WB scheme with $N=2000$ grid points.\label{fig:sin_p03}}
\end{figure}

\subsection{Two-Dimensional Examples}\label{sec42}
{\color{black} In this section, we test the perfomance of the proposed well-balanced central-upwind scheme on two numerical examples and 
compare it with the performance of the corresponding non-well-balanced scheme. We note that the computational cost of the well-balanced 
scheme is in average about 25--40\% higher than of its non-well-balanced couterpart when mesuared on the same grid. However, the resolution
achieved by the well-balanced scheme is much higher. This is especially pronounced when course grd computations are conducted. In such 
cases, in order to achieve a comparabale resolution by the non-well-balanced scheme, the computations must be perfomed on a much finer 
grid, which  makes the non-well-balanced scheme significantly less efficient as demonstrated in Example 4.}

\paragraph{Example 4---Isothermal Equilibrium Solution.}
In the first 2-D example, which was introduced in \cite{XinShu13}, we consider the system \eref{eq:global} with $\phi(x,y)=x+y$ subject to
the following initial data:
\begin{equation}
\rho(x,y,0)=1.21e^{-1.21\phi(x,y)},\quad u(x,y,0)\equiv v(x,y,0)\equiv0,\quad p(x,y,0)=e^{-1.21\phi(x,y)},
\label{4.2}
\end{equation}
satisfying \eref{1.11} and impose zero-order extensions at all of the four edges of the unit square $[0,1]\times[0,1]$. As in the 1-D case,
we use a discrete version of \eref{4.2} described in Appendix \ref{a2} rather than its continuous counterpart (here, $L(x,y,0)=e^{-1.21x}$
and $K(x,y,0)=e^{-1.21y}$).

We first use the discrete steady-state data and verify that they are preserved within the machine accuracy, when the solution is computed by
the proposed well-balanced CU scheme. On contrary, the non-well-balanced CU scheme preserves the initial equilibrium within the accuracy of 
the scheme only. 

Next, we add a small perturbation to the initial pressure and replace $p(x,y,0)$ in \eref{4.2} with
\begin{equation*}
p(x,y,0)=e^{-1.21\phi(x,y)}+\eta e^{-121\left((x-0.3)^2+(y-0.3)^2\right)},\quad\eta=10^{-6}.
\end{equation*}
In Figure \ref{fig49} and the upper row of Figure \ref{fig410}, we plot the pressure perturbation computed by both the well-balanced and
non-well-balanced CU schemes at time $T=0.15$ using $100\times100$ uniform cells. As one can clearly see, the well-balanced CU scheme can
capture the perturbation accurately {\color{black} (and it takes only in 2.18 seconds on a laptop)}, while the non-well-balanced one 
produces spurious  waves. When the mesh is refined to $800\times800$ uniform cells, the well-balanced solution remains oscillation-free; 
see Figure \ref{fig410} (lower left), and the spurious waves appearing in the non-well-balanced solution disappear; see
Figure \ref{fig410} (lower right). {\color{black} We stress that the non-well-balanced $800\times800$ computation takes  $1180.80$ seconds on
the same laptop. One can conclude that while the well-balanced CU scheme captures the perturbation efficiently on a coarse grid, the 
non-well-balanced CU scheme consumes very long time to achieve similar results.}
\begin{figure}[ht!]
\centerline{\scalebox{0.41}{\includegraphics{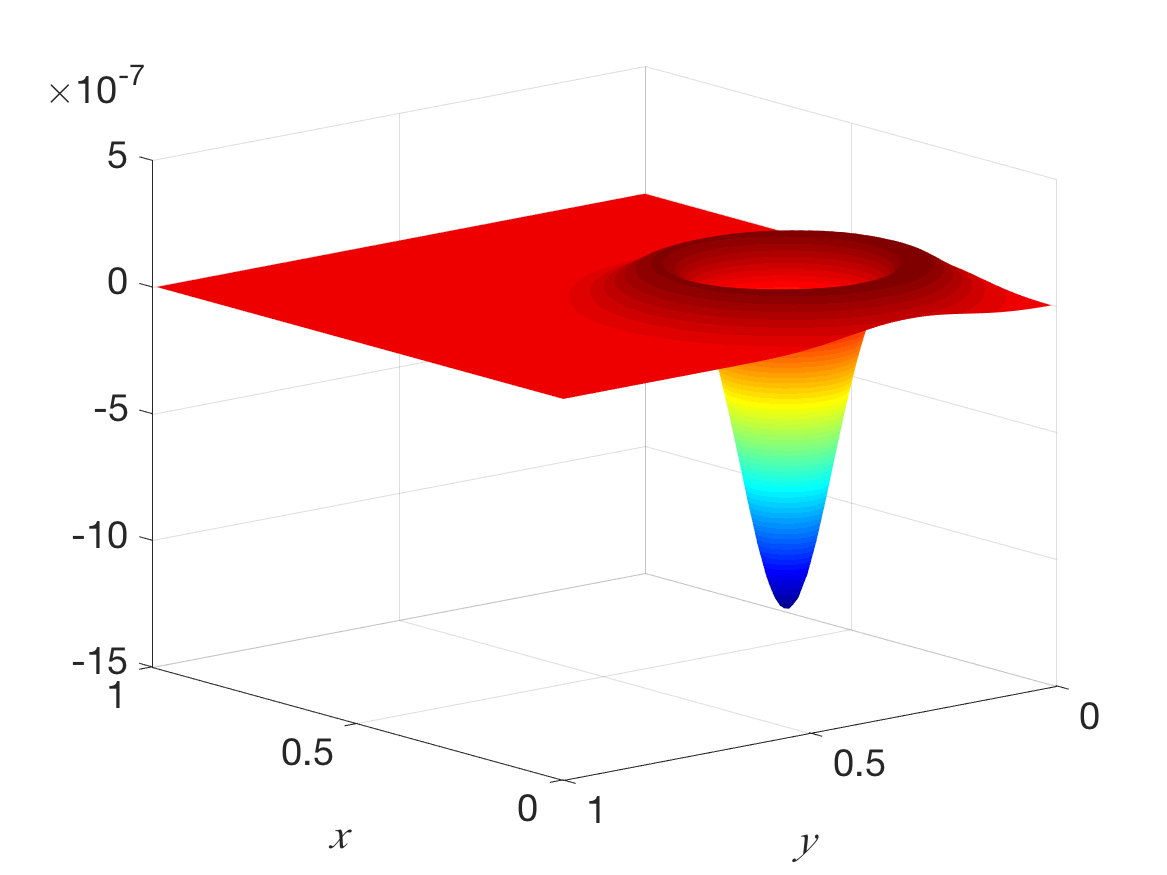}\hspace*{-0.0cm}{\includegraphics{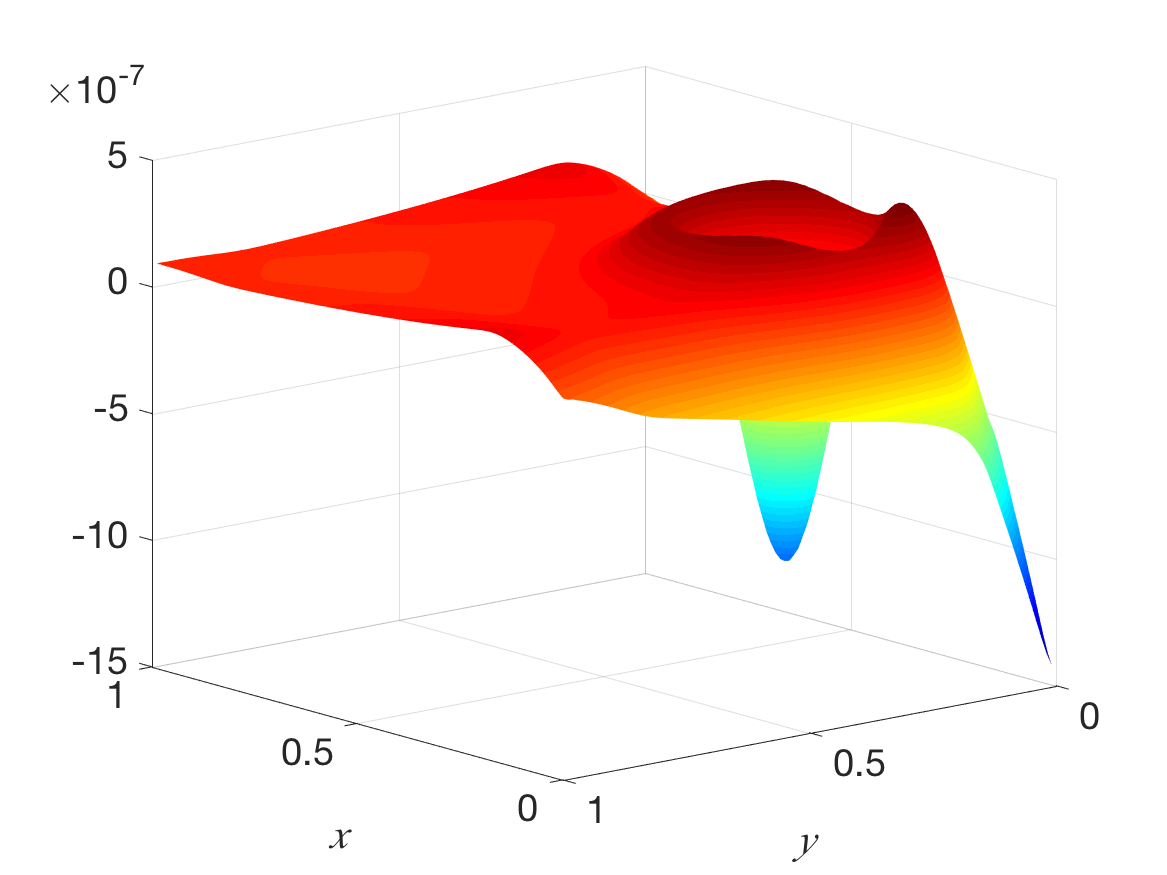}}}}
\caption{\sf Example 4: Pressure perturbation computed by the well-balanced (left) and non-well-balanced (right) CU schemes using
$100\times100$ uniform cells.\label{fig49}}
\end{figure}
\begin{figure}[ht!]
\centerline{\scalebox{0.45}{\includegraphics{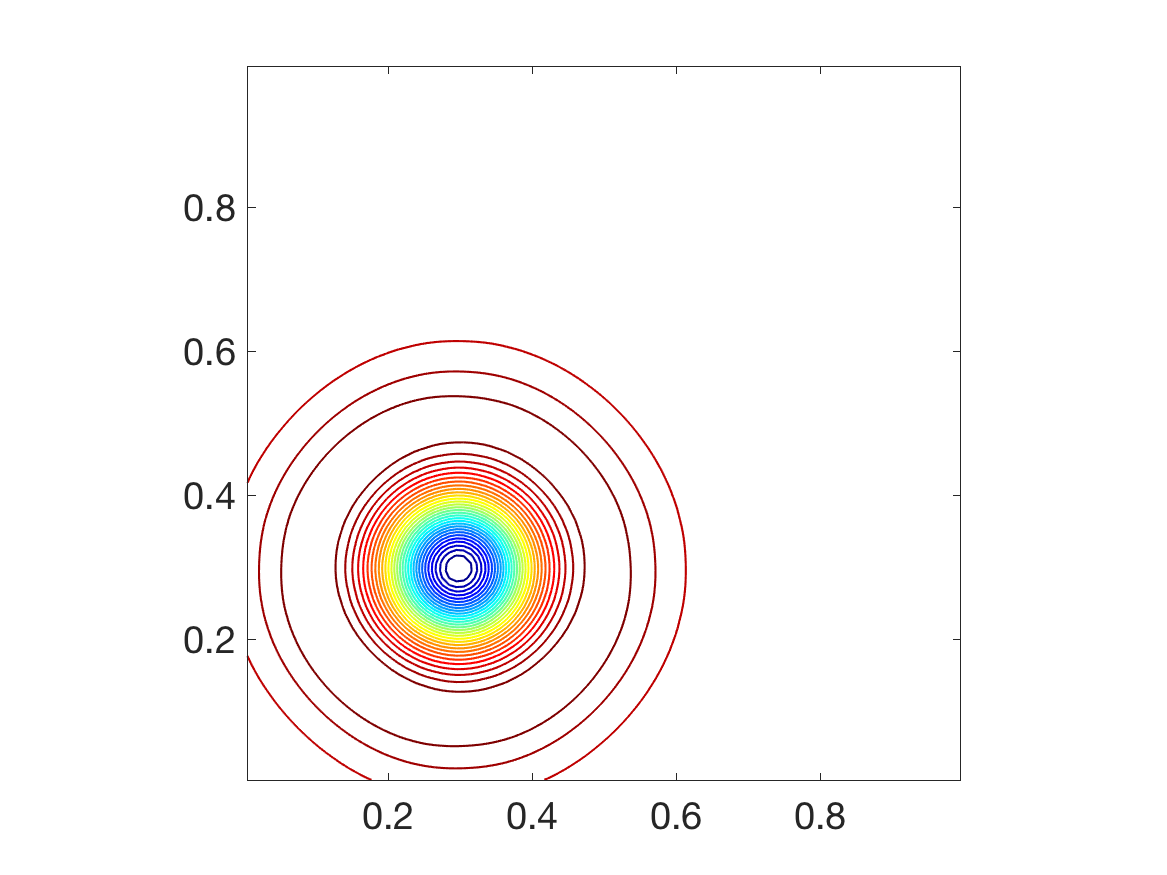}\hspace*{-2.5cm}{\includegraphics{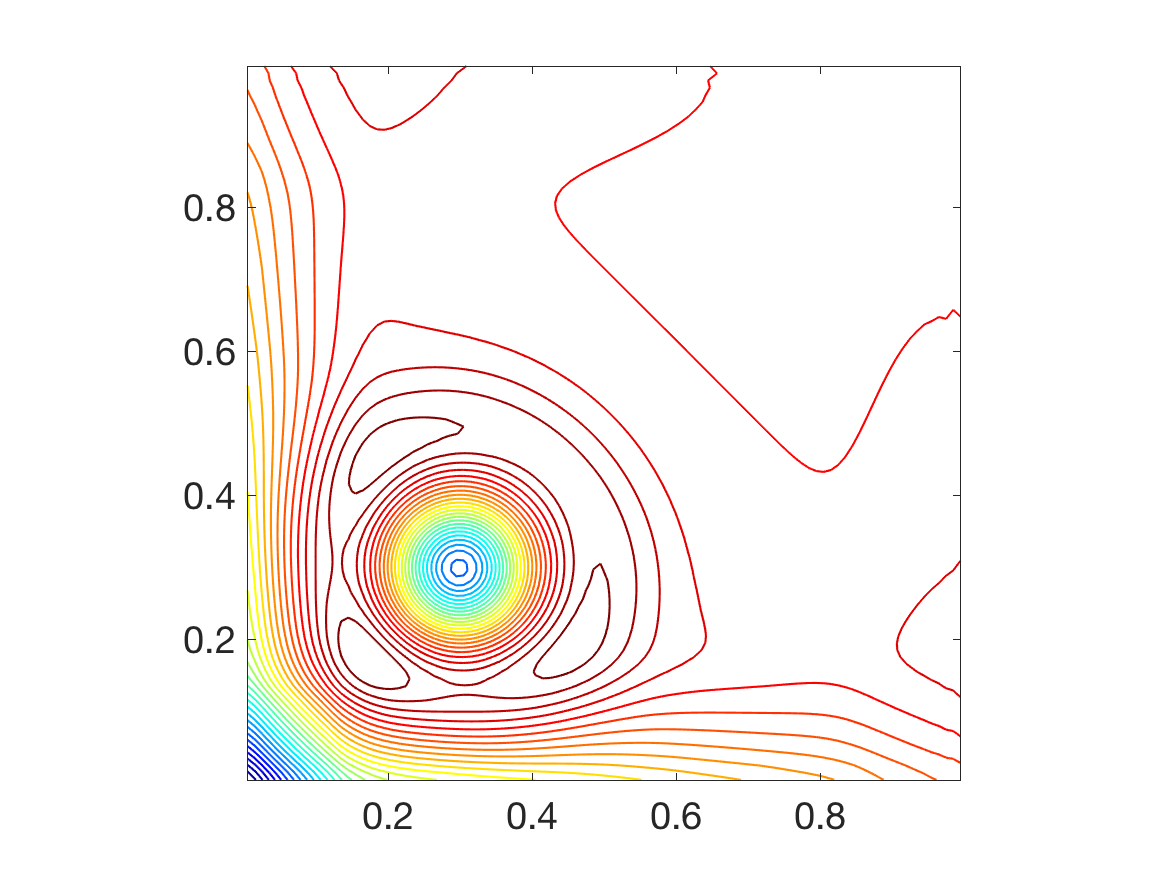}}}}
\centerline{\scalebox{0.45}{\includegraphics{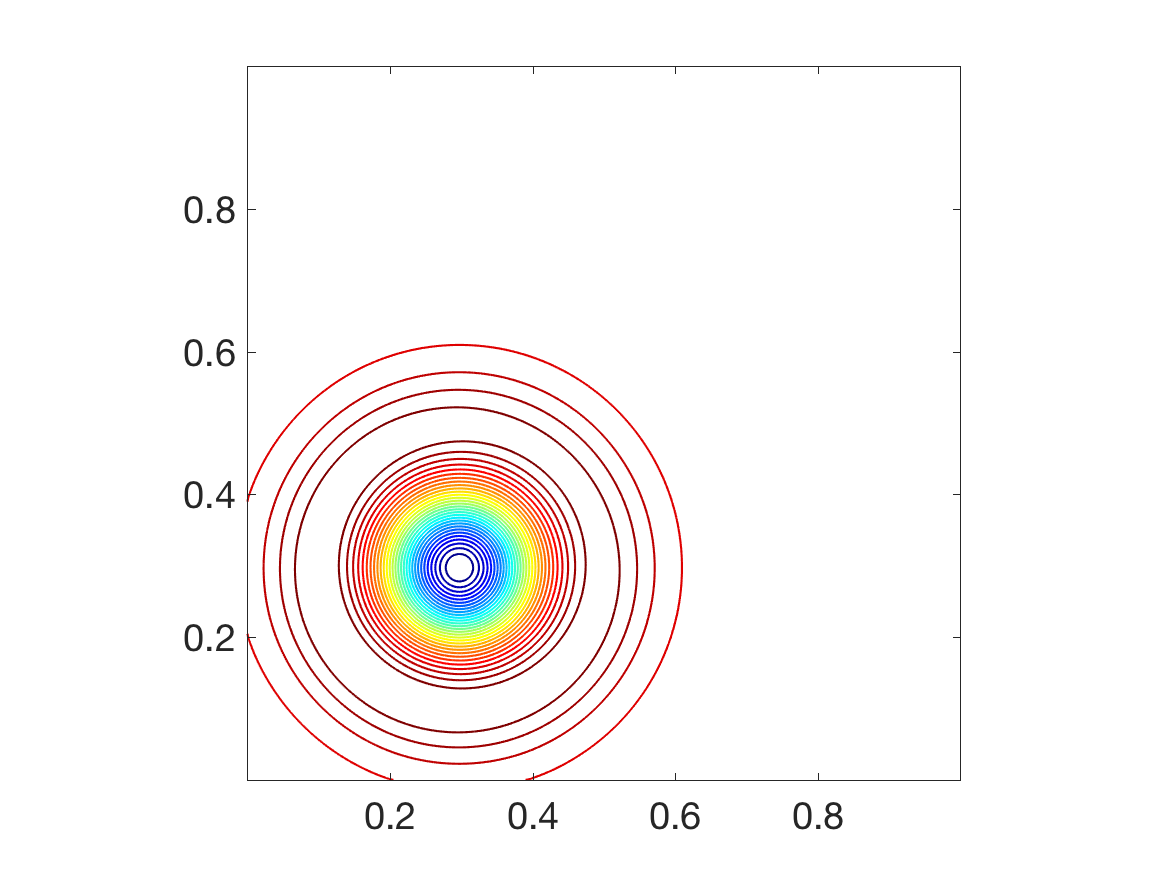}\hspace*{-2.5cm}{\includegraphics{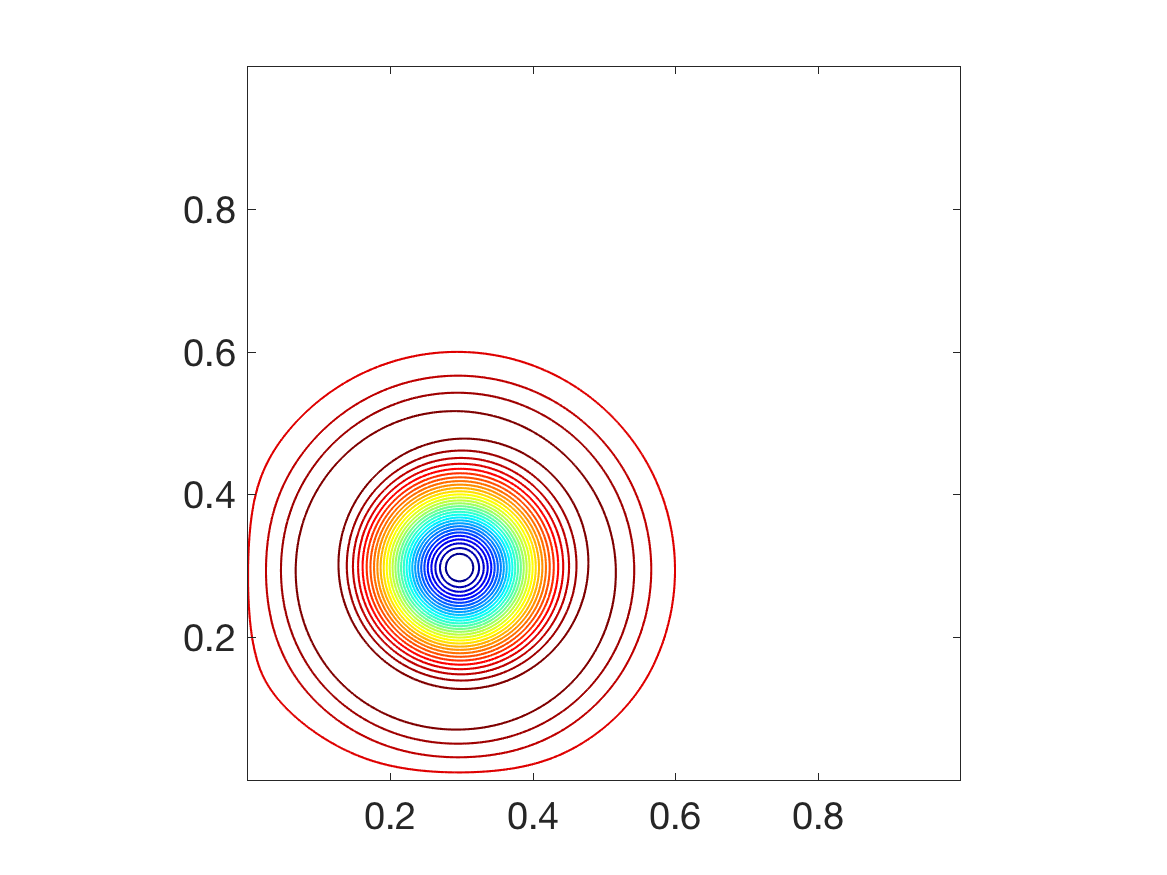}}}}
\caption{\sf Example 4: Contour plot of the pressure perturbation computed by well-balanced (left column) and non-well-balanced (right
column) CU schemes using $100\times100$ (upper row) and $800\times800$ (lower row) uniform cells.\label{fig410}}
\end{figure}
	
\paragraph{Example 5---Explosion.}
In the second 2-D example, we compare the performance of well-balanced and non-well-balanced CU schemes in an explosion setting and
demonstrate nonphysical shock waves generated by non-well-balanced scheme.
	
We solve the system \eref{eq:global} with $\phi(x,y)=0.118y$ in the computational domain $[0,3]\times[0,3]$ subject to the following initial
data:
$$
\begin{aligned}
&\rho(x,y,0)\equiv1,\quad u(x,y,0)\equiv v(x,y,0)\equiv0,\\
&p(x,y,0)=1-\phi(x,y)+\left\{\begin{array}{ll}0.005,&(x-1.5)^2+(y-1.5)^2<0.01,\\0,&~\mbox{otherwise}.\end{array}\right.
\end{aligned}
$$ 
Zero-order extrapolation is used as the boundary conditions in all of the directions.
	
We use a uniform grid with $101\times101$ cells and compute the solution by both the well-balanced and non-well-balanced CU schemes until
the final time $T=2.4$. At first, a circular shock wave is developed and later on it transmits through the boundary. Due to the heat
generated by the explosion, the gas at the center expands and its density decreases generating a positive vertical momentum at the center of
the domain. In Figures \ref{Ex3wb} and \ref{Ex3nwb}, we plot the solution ($\rho$ and $\sqrt{u^2+v^2}$ at times $t=1.2$, 1.8 and 2.4)
computed by the well-balanced and non-well-balanced schemes, respectively. As one can see, the well-balanced scheme accurately captures the
behavior of the solution at all stages, while the non-well-balanced scheme produces significant oscillations at the smaller time $t=1.2$,
which starts dominating the solution, especially its velocity field, by the final time $T=2.4$.
\begin{figure}[ht!]
\centerline{\hspace{-0.0cm}
\includegraphics[trim=120 20 100 0, clip, width=0.29\textwidth]{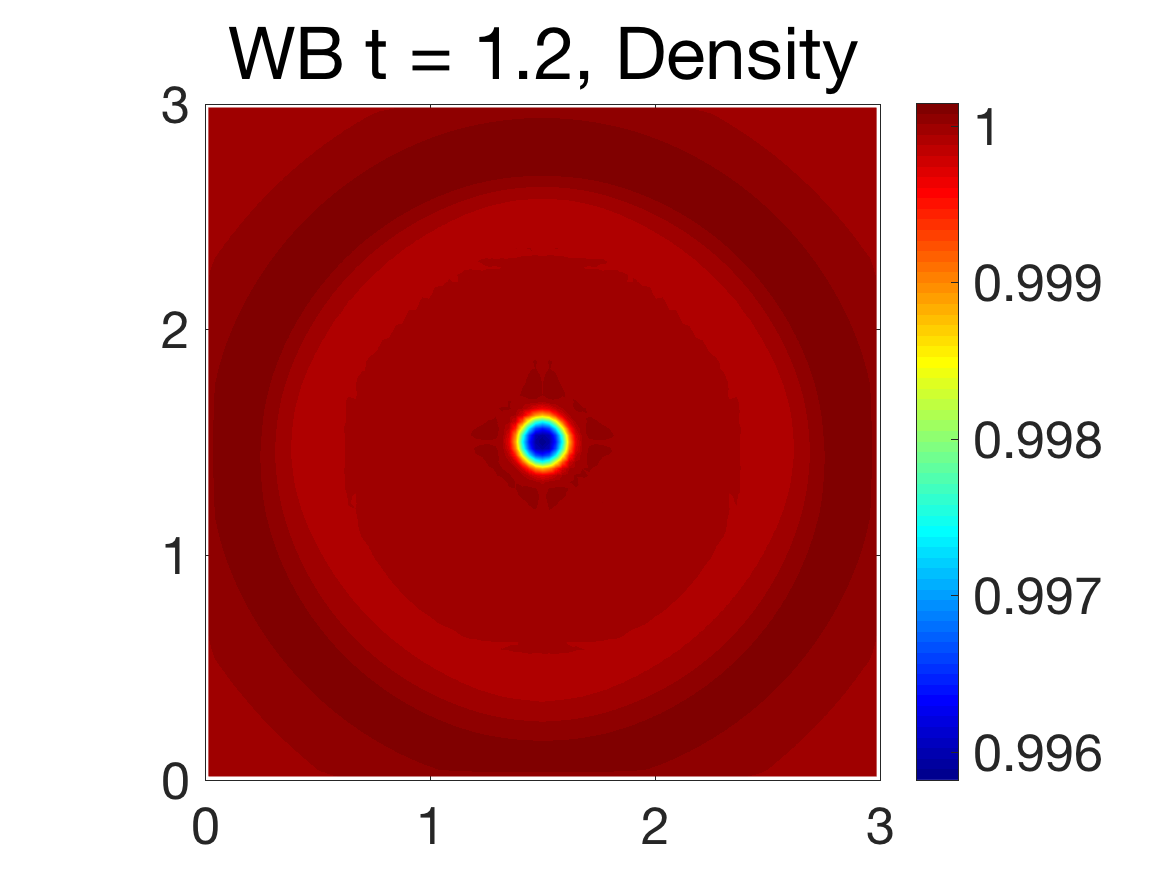}\hspace{0.2cm}
\includegraphics[trim=120 20 100 0, clip, width=0.29\textwidth]{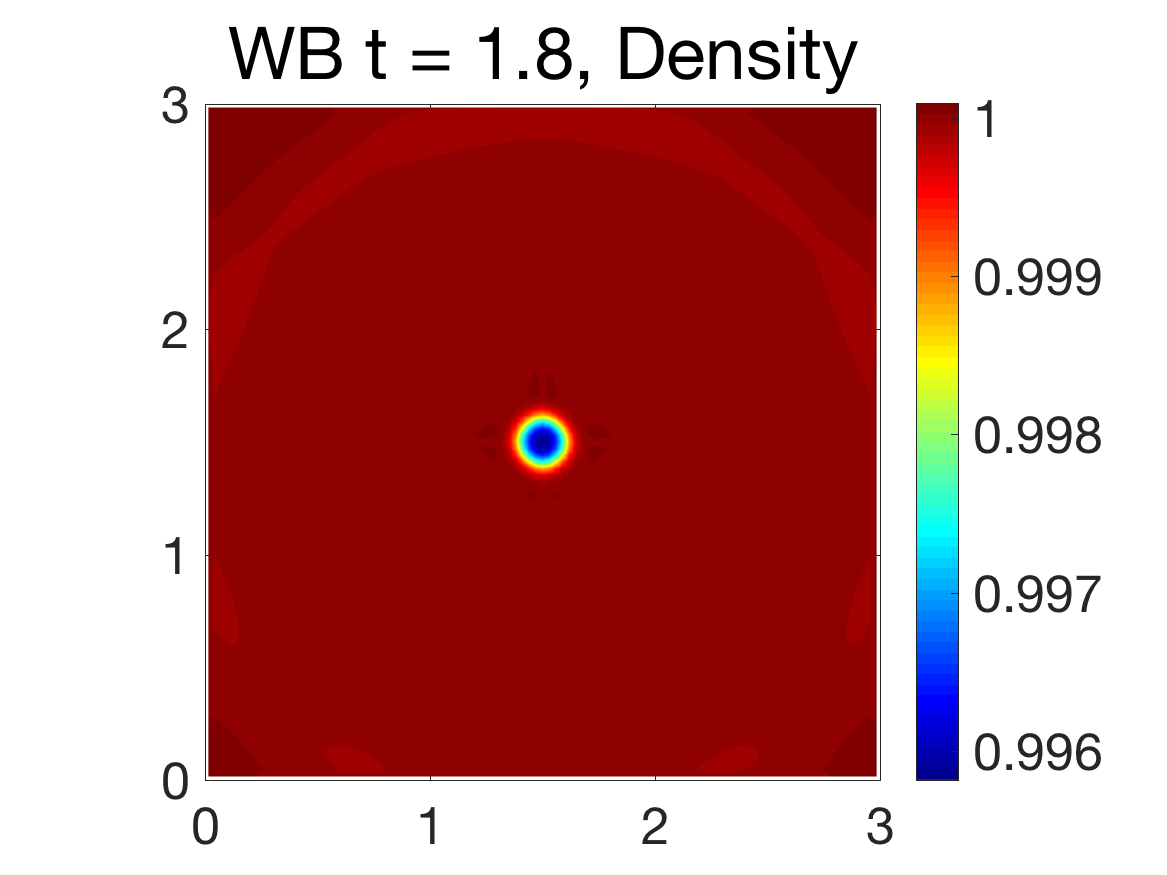}\hspace{-0.0cm}
\includegraphics[trim=80 20 30 0, clip, width=0.384\textwidth]{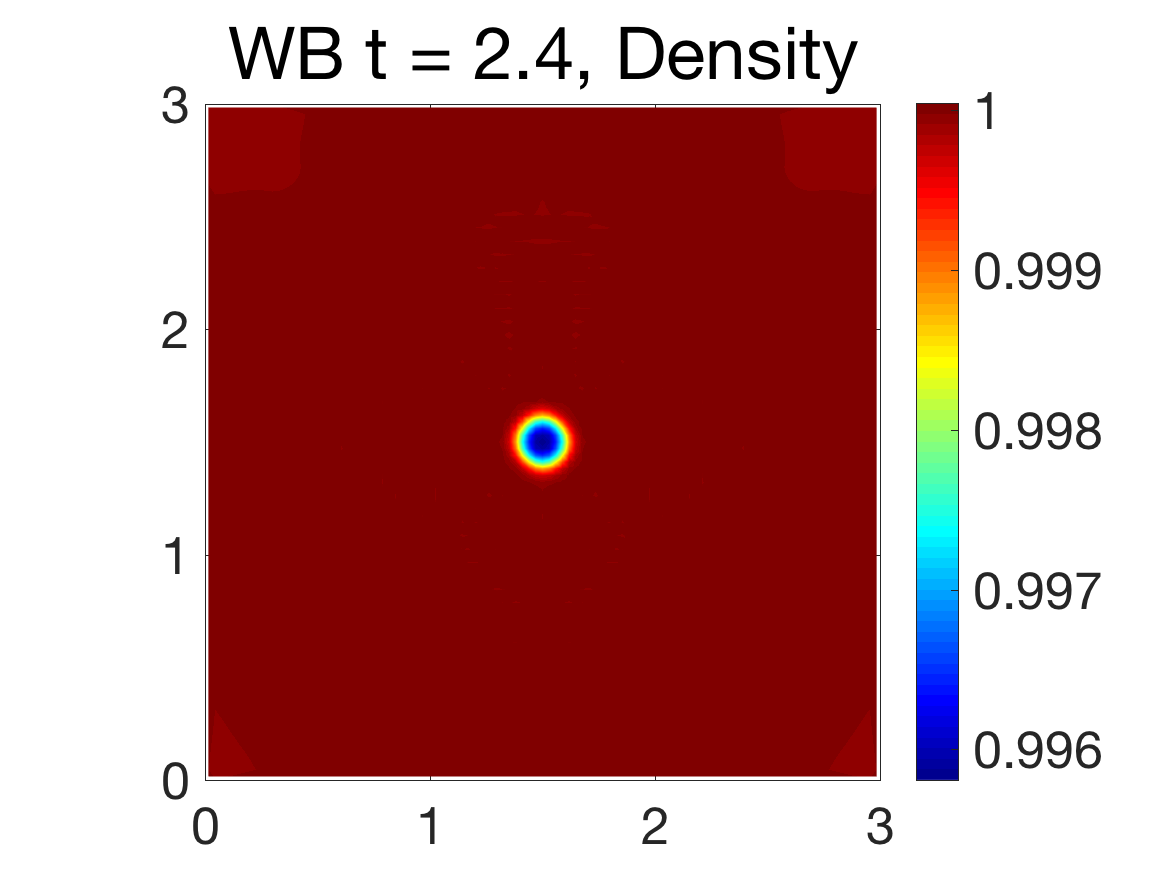}
}
\centerline{\hspace{-0.0cm}
\includegraphics[trim=120 40 100 0, clip, width=0.29\textwidth]{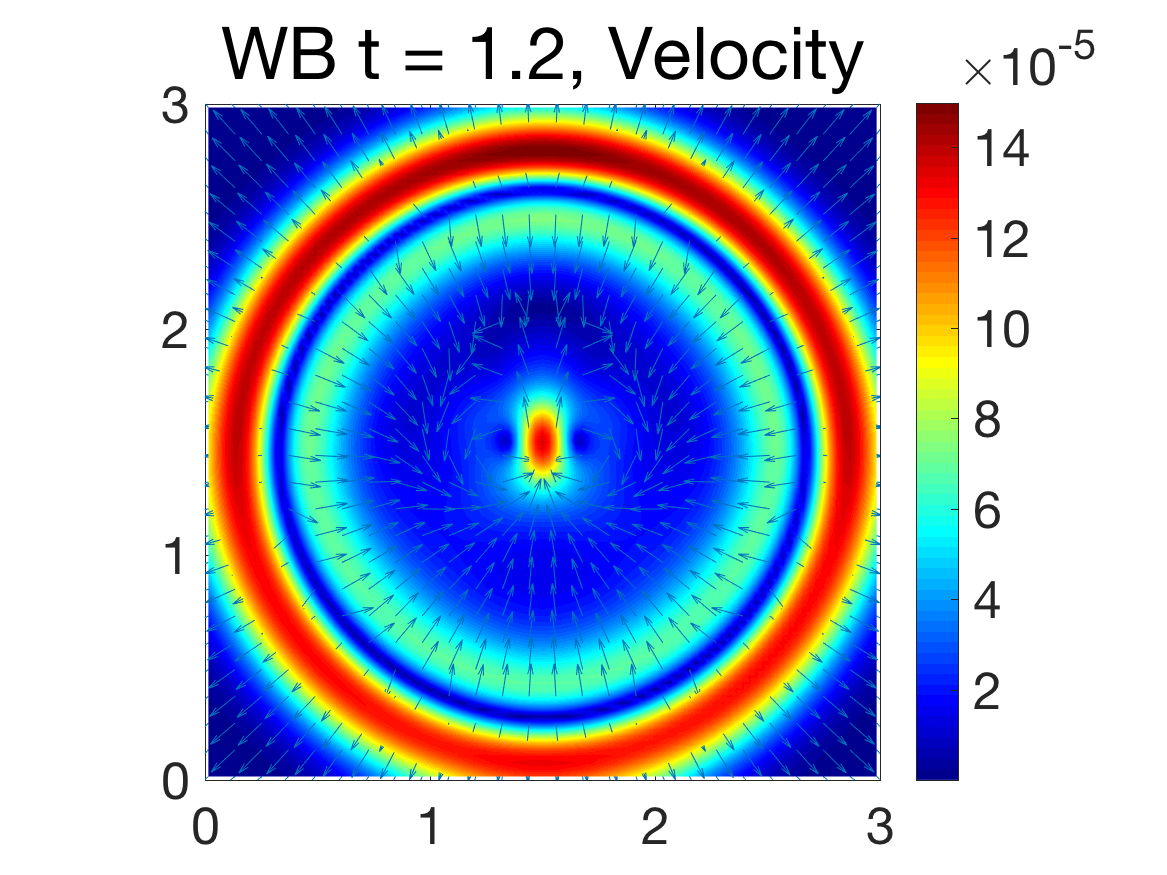}\hspace{0.2cm}
\includegraphics[trim=120 40 100 0, clip, width=0.29\textwidth]{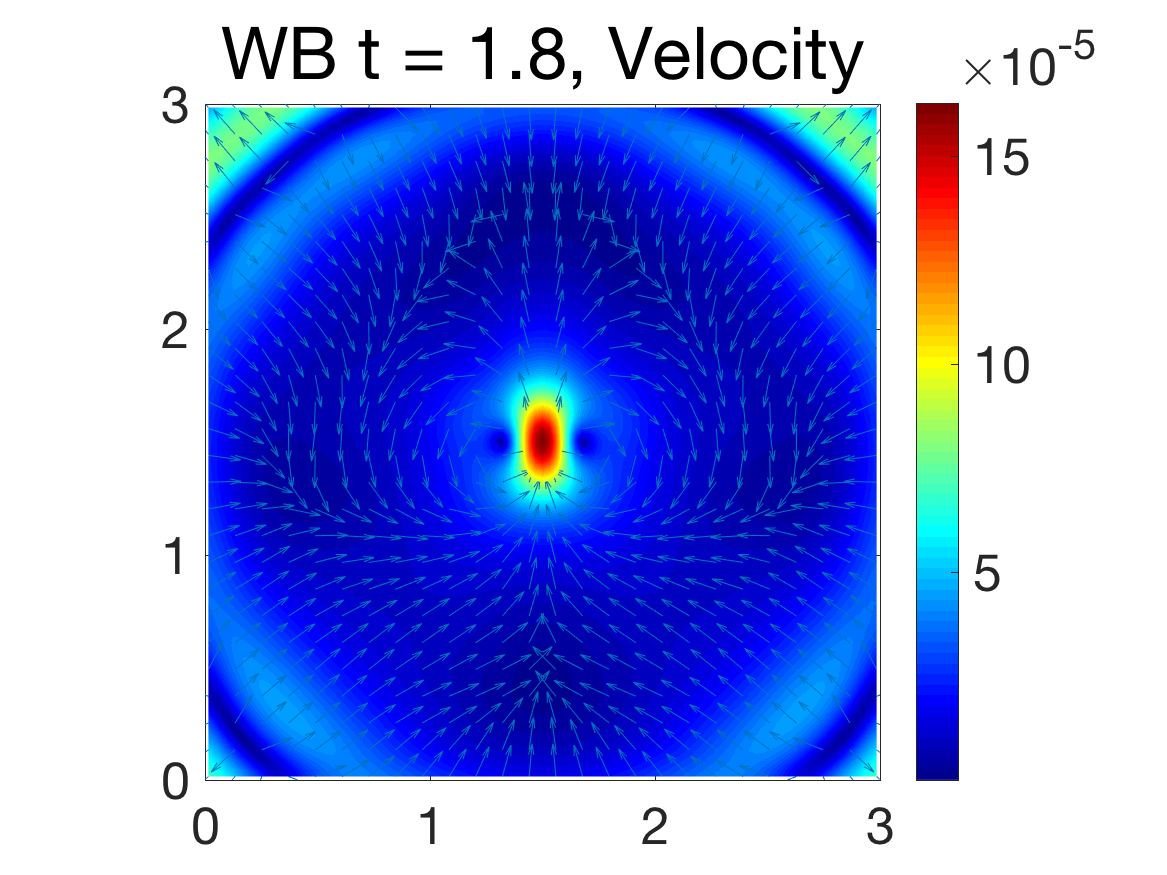}\hspace{-0.0cm}
\includegraphics[trim=80 40 30 0, clip, width=0.384\textwidth]{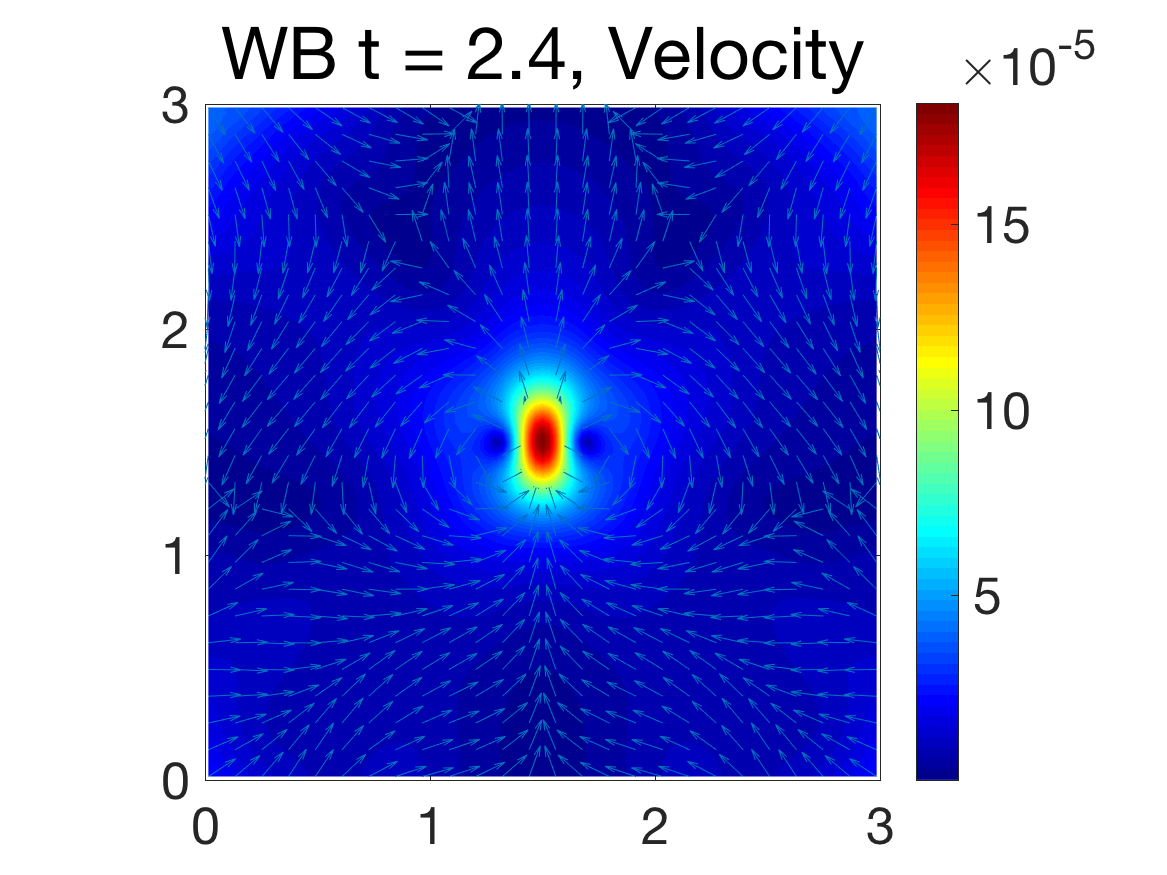}
}
\caption{\sf Example 5: Density ($\rho$) and velocity ($\sqrt{u^2+v^2}$) computed by the well-balanced CU scheme in the domain
$[0,3]\times[0,3]$.\label{Ex3wb}}
\end{figure}
\begin{figure}[ht!]
\centerline{\hspace{-0.0cm}
\includegraphics[trim=120 20 100 0, clip, width=0.29\textwidth]{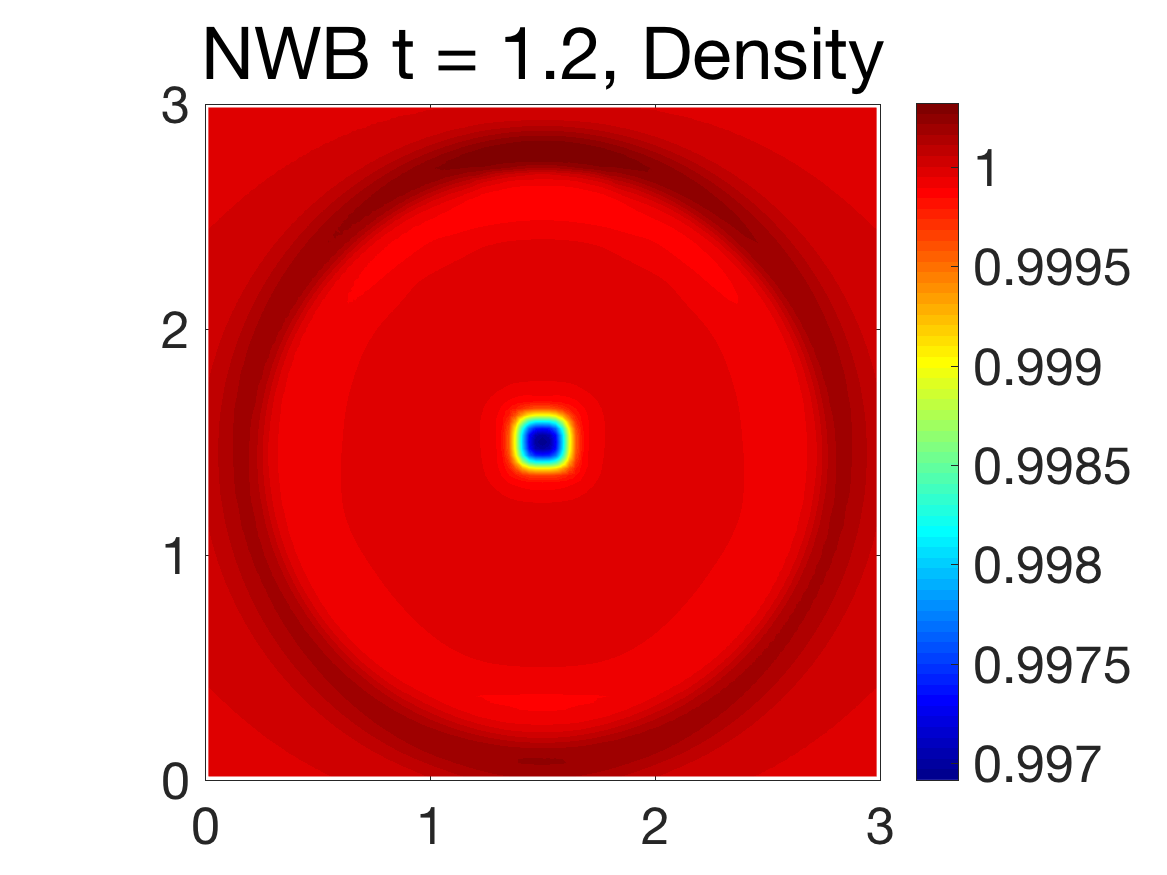}\hspace{0.2cm}
\includegraphics[trim=120 20 100 0, clip, width=0.29\textwidth]{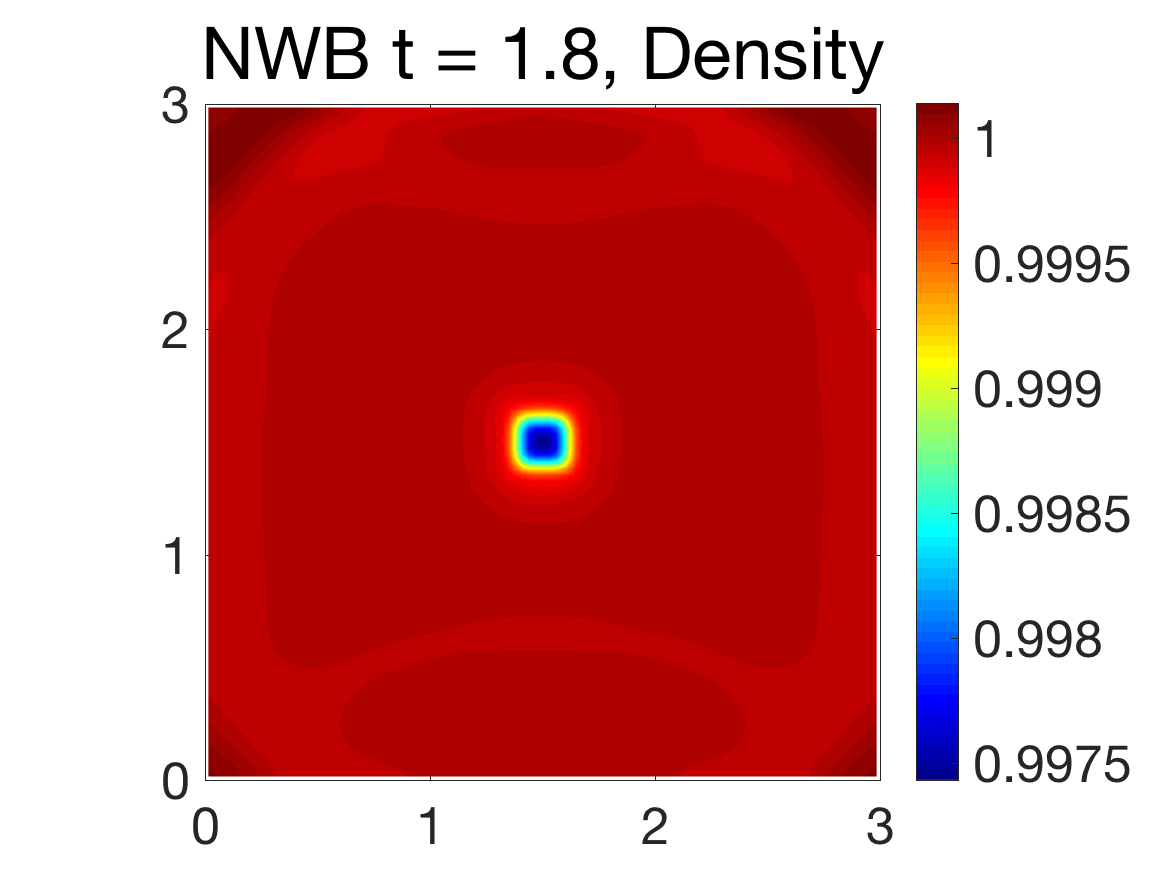}\hspace{-0.0cm}
\includegraphics[trim=80 20 30 0, clip, width=0.384\textwidth]{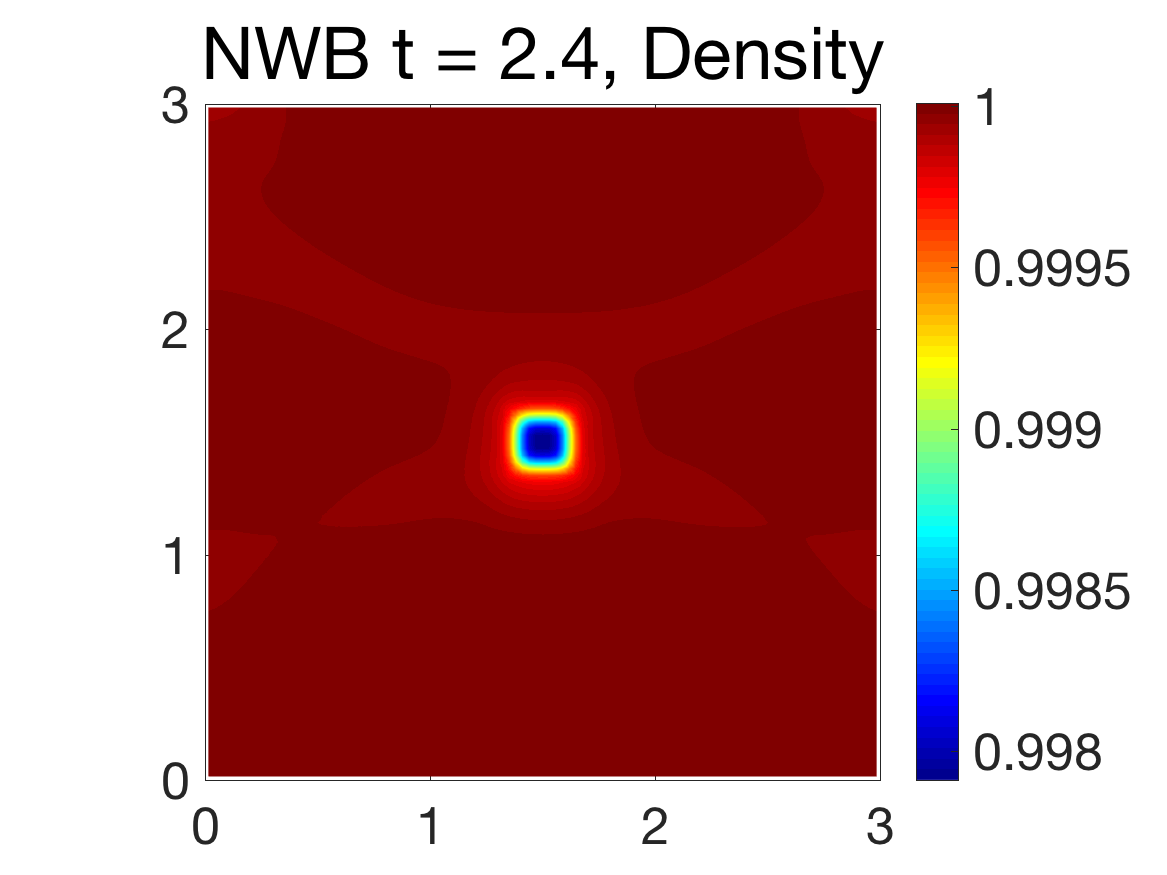}
}
\centerline{\hspace{-0.0cm}
\includegraphics[trim=120 40 100 0, clip, width=0.29\textwidth]{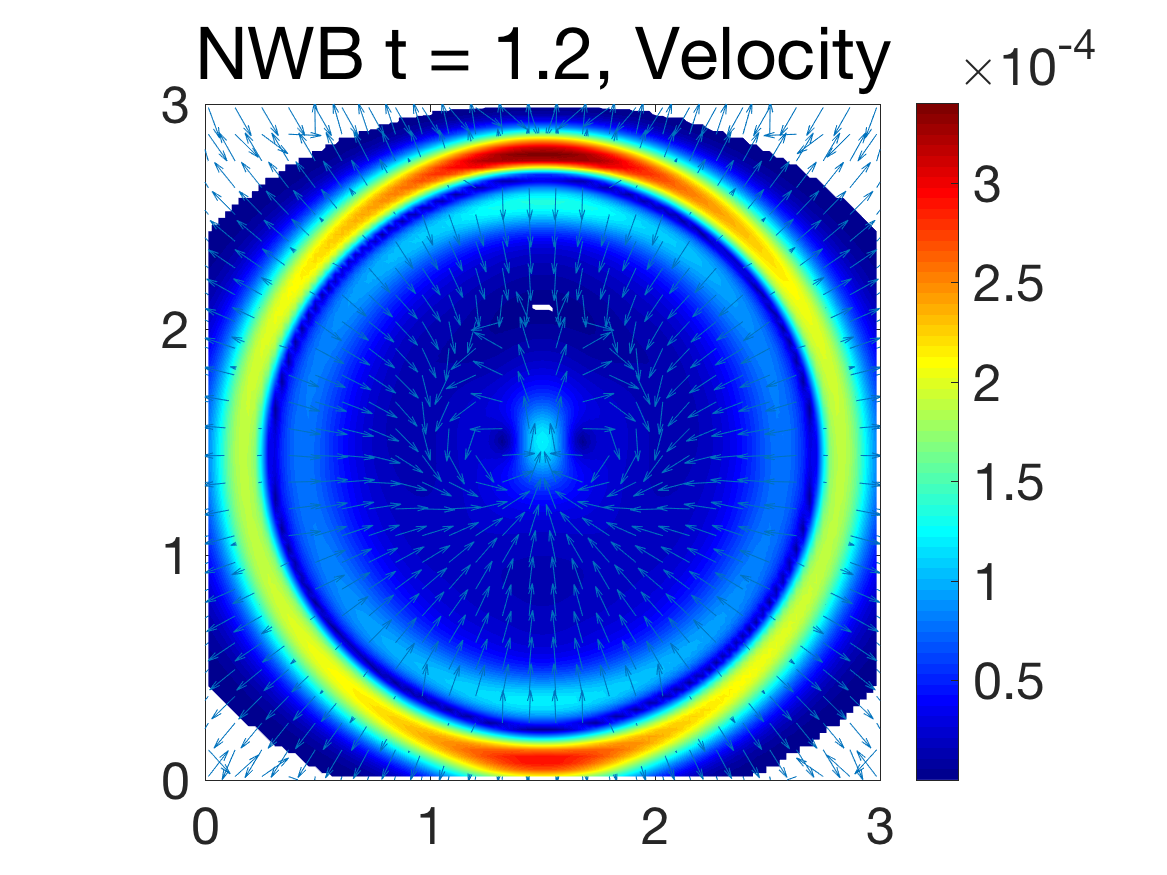}\hspace{0.2cm}
\includegraphics[trim=120 40 100 0, clip, width=0.29\textwidth]{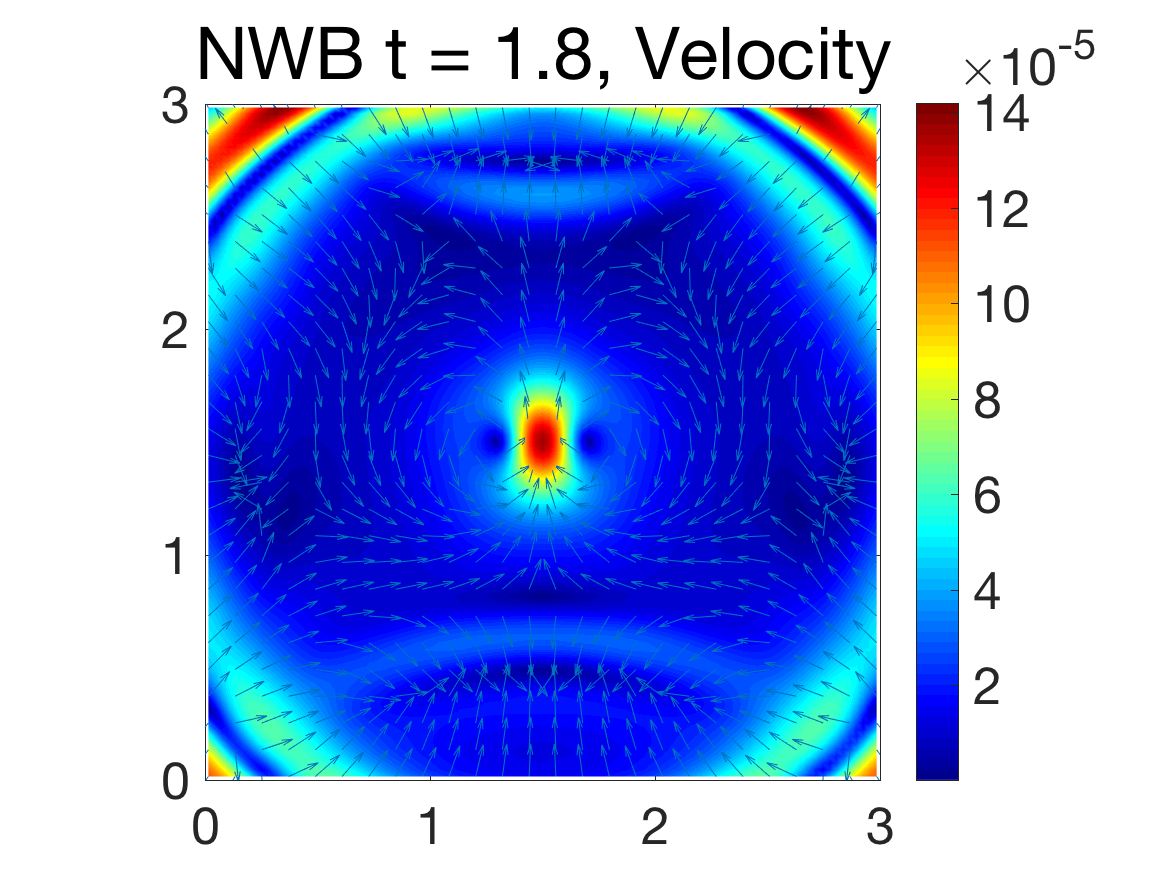}\hspace{-0.0cm}
\includegraphics[trim=80 40 30 0, clip, width=0.384\textwidth]{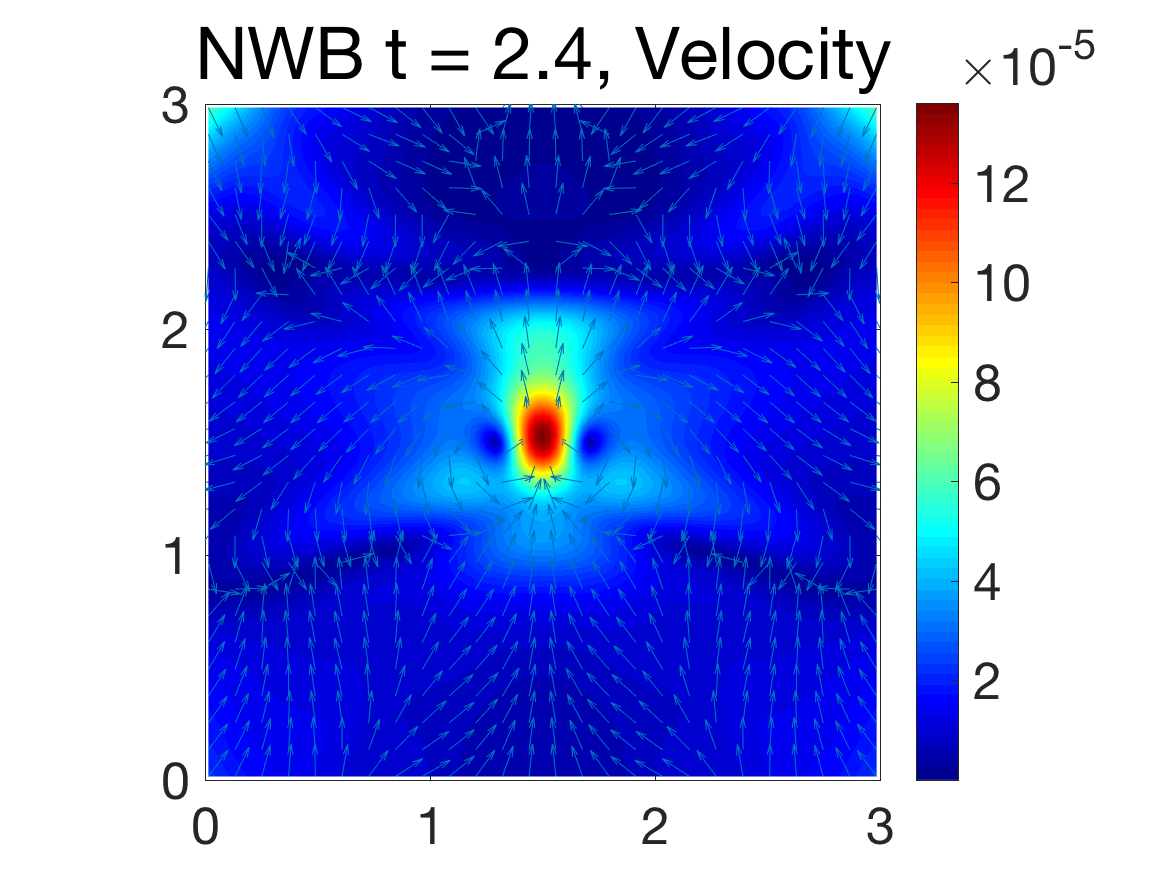}
}
\caption{\sf Example 5: Density ($\rho$) and velocity ($\sqrt{u^2+v^2}$) computed by the non-well-balanced CU scheme in the domain
$[0,3]\times[0,3]$.\label{Ex3nwb}}
\end{figure}

{\color{black}
\section{Conclusion}\label{sec5}
We have presented a new well-balanced CU scheme for the Euler equations with gravitation. Our scheme is based on a new conservative
reformulation of the original system of balance laws. This reformulation is obtained by introducing new global equilibrium variables which
are also used in the reconstruction step and together with a well-balanced time evolution ensure the well-balanced property of the resulting
numerical method. 

It should be observed that in some astrophysical applications the gravitational energy $\rho\phi$ may be orders of magnitude larger than the
fluid energy $E$. We have not considered such cases in the current paper, but would like to point out that combining relatively small fluid 
dynamics quantities $p$ and $E$ with larger gravitational quantities $Q$, $R$ and $\rho\phi$, may, in principle, lead to inaccurate 
calculations of point values of $p$ and $E$ from the reconstructed values of new variables $K=p+Q$, $L=p+R$ and $E+\rho\phi$. The 
applicability of the proposed well-balanced approach in these special cases should be studied separately.

We would also like to note that in the studied system \eref{1.2} and its 1-D version the gravitational potential $\phi$ is assumed to be
time-independent. In the case of time-dependent $\phi$ the conservative reformulations \eref{eq:global}, \eref{QR} and \eref{g11},
\eref{g12} are not valid. However, one can still introduce the global fluxes in the momenta equations, while treating the source term in the
energy equation using the midpoint quadrature, which will lead to a well-balanced central-upwind scheme.}

\begin{acknowledgment}
The work of A. Chertock was supported in part by NSF grant DMS-1521051. The work of A. Kurganov was supported in part by NSF grant
DMS-1521009. The work of \c{S}. N.~\"{O}zcan was supported in part by the Turkish Ministry of National Education. The work of E. Tadmor was
supported in part by ONR grant N00014-1512094 and NSF grants RNMS11-07444 (Ki-Net) and DMS-1613911.
\end{acknowledgment}

{\color{black}
\appendix
\section{Proof of Theorem \ref{thm3.1}}\label{prf}
Assume that at certain time level, we have
\begin{equation}
u_{j,k}^{\rm E}=u_{j,k}^{\rm W}=u_{j,k}^{\rm N}=u_{j,k}^{\rm S}=v_{j,k}^{\rm E}=v_{j,k}^{\rm W}=v_{j,k}^{\rm N}=v_{j,k}^{\rm S}\equiv0
\label{data1}
\end{equation}
and
\begin{equation}
K_{j,k}^{\rm E}=K_{j,k}^{\rm W}=\widehat K_k,~~\forall j,\qquad L_{j,k}^{\rm N}=L_{j,k}^{\rm S}=\widehat L_j,~~\forall k,
\label{data2}
\end{equation}
where $\widehat K_k$ and $\widehat L_j$ only depend on $k$ and $j$, respectively. In order to prove that the proposed scheme is
well-balanced, we will show that for the data in \eref{data1} and \eref{data2} the $x$-numerical fluxes $\bm{\mathcal {F}}_{\jph,k}$ depend
on $k$ only, while the $y$-numerical fluxes $\bm{\mathcal {G}}_{j,\kph}$ depend on $j$ only. This will ensure that the RHS of \eref{3.10}
is identically equal to zero at such steady states.

Indeed, the first components of the numerical fluxes, $\mathcal{F}^{(1)}_{\jph,k}$ and $\mathcal{G}^{(1)}_{j,\kph}$, vanish since
\eref{data1} is satisfied and \eref{data2} implies $H(\psi_{\jph,k})=H(\psi_{j,\kph})=H(0)=0$. The second component of the $x$-numerical
flux is $\mathcal{F}^{(2)}_{\jph,k}=\widehat K_k$ since $u_{j,k}^{\rm E}=u_{j+1,k}^{\rm W}=0$ and
$K_{j,k}^{\rm E}=K_{j+1,k}^{\rm W}=\widehat K_k$. Similarly, the third component of the $y$-numerical flux is
$\mathcal{G}^{(3)}_{j,\kph}=\widehat L_j$ since $v_{j,k}^{\rm N}=u_{j,k+1}^{\rm S}=0$ and $L_{j,k}^{\rm N}=L_{j,k+1}^{\rm S}=\widehat L_j$.
Next, \eref{data1} implies that the third component of the $x$-numerical flux and the second component of the $y$-numerical flux,
$\mathcal{F}^{(3)}_{\jph,k}$ and $\mathcal{G}^{(2)}_{j,\kph}$, vanish. Finally, the fourth component of the $x$-numerical flux vanishes:
\begin{equation*}
\begin{aligned}
\mathcal{F}^{(4)}_{\jph,k}&=\alpha_{\jph,k}\left[E_{j+1,k}^{\rm W}-E_{j,k}^{\rm E}+H(\psi_{\jph,k})\cdot
\left((\rho\phi)_{j+1,k}^{\rm W}-(\rho\phi)_{j,k}^{\rm E}-\delta(E+\rho\phi)_{\jph,k}\right)\right]\\
&=\frac{\alpha_{\jph,k}}{\gamma-1}\cdot\frac{p_{j+1,k}^{\rm W}-p_{j,k}^{\rm E}}{2}
=\frac{\alpha_{\jph,k}}{2(\gamma-1)}\left[\big(K_{j+1,k}^{\rm W}-Q_{\jph,k})-(K_{j,k}^{\rm E}-Q_{\jph,k}\big)\right]=0,
\end{aligned}
\end{equation*}
since $K_{j,k}^{\rm E}=K_{j+1,k}^{\rm W}=\widehat K_k$ for all $j$. Similarly, the fourth component of the $y$-numerical flux also
vanishes:
\begin{equation*}
\begin{aligned}
\mathcal{G}^{(4)}_{j,\kph}&=\beta_{j,\kph}\left[E_{j,k+1}^{\rm S}-E_{j,k}^{\rm N}+H(\psi_{j,\kph})\cdot
\left((\rho\phi)_{j,k+1}^{\rm S}-(\rho\phi)_{j,k}^{\rm N}-\delta(E+\rho\phi)_{j,\kph}\right)\right]\\
&=\frac{\beta_{j,\kph}}{\gamma-1}\cdot\frac{p_{j,k+1}^{\rm S}-p_{j,k}^{\rm N}}{2}
=\frac{\beta_{j,\kph}}{2(\gamma-1)}\left[\big(L_{j,k+1}^{\rm S}-R_{j,\kph})-(L_{j,k}^{\rm N}-R_{j,\kph}\big)\right]=0,
\end{aligned}
\end{equation*}
since $L_{j,k}^{\rm N}=L_{j,k+1}^{\rm S}=\widehat L_j$ for all $k$.$\hfill\blacksquare$
}

\section{Discrete steady states}\label{app1}
In this section, we describe a possible way discrete steady states can be constructed in both the 1-D and 2-D cases.

\subsection{One-Dimensional Discrete Steady States}\label{a1}
The initial data corresponding to a 1-D discrete steady state can be constructed as follows. Given
$\rho_k:=\rho(y_k),~v_k:=0,~L_k=L(y_k)={\rm Const},~k=k_\ell,\ldots,k_r$, we use the recursive formula \eref{RRR} to obtain the discrete
values $R_k$. Then, $p_k$ are computed from \eref{2.15}, $p_k=L_k-R_k,~k=k_\ell,\ldots,k_r$, and $E_k$ are obtained from the EOS,
$E_k=\frac{p_k}{\gamma-1}+\frac{1}{2}\rho_k(v_k)^2,~k=k_\ell,\ldots,k_r$.

\subsection{Two-Dimensional Discrete Steady States}\label{a2}
The initial data corresponding to a 2-D discrete steady state can be constructed in a similar though more complicated way. In the 2-D case,
both $u_{j,k}=v_{j,k}=0$, but neither $L$ nor $K$ are constant throughout the computational domain. However, since $L$ is independent of $y$
and $K$ is independent of $x$, we have $L_{j,k}=L_j$ and $K_{j,k}=K_k,~j=j_\ell,\ldots,j_r,~k=k_\ell,\ldots,k_r$, which are assumed to be
given.

We now show how one can construct a steady-state solution for a specific case of $\phi(x,y)=\phi(x+y)$, which is satisfied by $\phi$ used in
Example 4.

We first set the zero values of $Q$ and $R$ (introduced in \eref{QR}) at the right and bottom parts of the boundary, respectively
(alternatively, one could set the zero values of $R$ and $S$ at the left and upper parts):
$$
Q_{j_{r}+\hf,k}=0,~~k=k_\ell,\ldots,k_r,\quad R_{j,k_\ell-\hf}=0,~~j=j_\ell,\ldots,j_r.
$$
We then obtain the discrete values of $\rho_{j,k}$ and $p_{j,k}$ recursively as $j=j_r,\ldots,j_\ell$ and $k=k_\ell,\ldots,k_r$:
\begin{equation*}
\begin{aligned}
&\rho_{j,k}=\frac{L_j-K_k-R_{j,\kmh}+Q_{\jph,k}}{\frac{\dx}{2}\phi_x+\frac{\dy}{2}\phi_y},\\
&p_{j,k}=\frac{L_j+K_k-R_{j,\kmh}-Q_{\jph,k}+\left(\frac{\dx}{2}\phi_x-\frac{\dy}{2}\phi_y\right)\rho_{j,k}}{2},\\
&E_{j,k}=\frac{p_{j,k}}{\gamma-1}+\frac{\rho_{j,k}}{2}(u_{j,k}^2+v_{j,k}^2),\\
&Q_{\jmh,k}=Q_{\jph,k}-\dx\,\rho_{j,k}(\phi_x)_{j,k},\quad R_{j,\kph}=R_{j,\kmh}+\dy\,\rho_{j,k}(\phi_y)_{j,k}.
\end{aligned}
\end{equation*}

\bibliographystyle{siam}
\bibliography{ref_grav}
\end{document}